\documentclass[pdflatex,sn-mathphys-num]{sn-jnl}% Math and Physical Sciences Numbered Reference Style

\usepackage{graphicx}%
\usepackage{multirow}%
\usepackage{amsmath,amssymb,amsfonts}%
\usepackage{amsthm}%
\usepackage{mathrsfs}%
\usepackage[title]{appendix}%
\usepackage{xcolor}%
\usepackage{textcomp}%
\usepackage{manyfoot}%
\usepackage{booktabs}%
\usepackage{algorithm}%
\usepackage{algorithmicx}%
\usepackage{algpseudocode}%
\usepackage{listings}%

\usepackage{mathrsfs} 
\usepackage{enumitem,lineno}
\usepackage{graphicx}

\usepackage[T1]{fontenc}
\usepackage{amsfonts,amsmath,amssymb}
\usepackage{tikz,comment}
\usetikzlibrary{positioning, calc, trees, decorations.markings, patterns, arrows, arrows.meta}
\usepackage{diagmac2}

\theoremstyle{thmstyleone}%
\newtheorem{Theorem}{Theorem}[section]
\newtheorem{Lemma}[Theorem]{Lemma}
\newtheorem{Proposition}[Theorem]{Proposition}
\newtheorem{Corollary}[Theorem]{Corollary}
\newtheorem{Conjecture}[Theorem]{Conjecture}
\newtheorem{Problem}[Theorem]{Problem}

\theoremstyle{thmstyletwo}%
\newtheorem{Example}[Theorem]{Example}
\newtheorem{Remark}[Theorem]{Remark}

\theoremstyle{thmstylethree}%
\newtheorem{Definition}[Theorem]{Definition}

\theoremstyle{thmstyletwo}%
\newtheorem{remark}{Remark}%

\theoremstyle{thmstylethree}%

\begin{document}

\title[On the center of distances of finite ultrametric spaces]{On the center of distances of finite ultrametric spaces}

%%=============================================================%%
%% GivenName	-> \fnm{Joergen W.}
%% Particle	-> \spfx{van der} -> surname prefix
%% FamilyName	-> \sur{Ploeg}
%% Suffix	-> \sfx{IV}
%% \author*[1,2]{\fnm{Joergen W.} \spfx{van der} \sur{Ploeg} 
%%  \sfx{IV}}\email{iauthor@gmail.com}
%%=============================================================%%

\author[1,2]{\fnm{Oleksiy} \sur{Dovgoshey}}\email{oleksiy.dovgoshey@gmail.com}

\author*[3]{\fnm{Olga} \sur{Rovenska}}\email{rovenskaya.olga.math@gmail.com}
%\equalcont{These authors contributed equally to this work.}

\affil[1]{\orgdiv{Department of Theory of Function}, \orgname{Institute of Applied Mathematics and Mechanics of NASU},  \city{Sloviansk}, \postcode{84100}, \country{Ukraine}}

\affil[2]{\orgdiv{Department of Mathematics and Statistics}, \orgname{University of Turku}, \city{Turku}, \postcode{20014},  \country{Finland}}

\affil*[3]{\orgdiv{Department of Mathematics and Modeling}, \orgname{Donbas State Engineering Academy},  \city{Kramatorsk}, \postcode{84313},  \country{Ukraine}}

%%==================================%%
%% Sample for unstructured abstract %%
%%==================================%%

\abstract{The center of distances of a metric space $(X,d)$ is the set
\(
C(X)\) of all $t\in \mathbb R^+$ for which the equation $d(x,p)=t$ has a solution for each $p\in X$. We prove the inequality $|C(X)| \le 1 + \lfloor \log_2 n \rfloor$  for all finite ultrametric spaces $(X,d)$ which have exactly $n$  points. It is also shown that for every integer $n \geq 1$ there exists a finite ultrametric space $(Y,\rho)$ such that $|Y| = n$ and
\(
|C(Y)| = 1 + \log_2 \lfloor n \rfloor.
\)
}

\keywords{center of distances, diametrical graph, finite ultrametric space, labeled rooted tree.}

%%\pacs[JEL Classification]{D8, H51}

\pacs[MSC Classification]{54E35}

\maketitle

\section{Introduction}

The concept of the center of distances was introduced by Wojciech Bielaś, Szymon Plewik and Marta Walczyńska in paper \cite{BPW2018} as follows.
\begin{Definition}\label{zcgh572}
Let $(X,d)$ be a metric space and let $D(X)$ be the distance set of $(X,d)$,
\[
D(X) := \{ d(x,y) : x,y \in X \}.
\]
The {\it center of distances} of $(X,d)$ is a subset $C(X)$ of the {\it distance set} $D(X)$
 defined as
\begin{equation}
    \label{edft34Gb}
C(X):=\{t \in D(X) : \forall p\in X\ \exists x\in X\ d(p,x)=t\}.
\end{equation}
\end{Definition}
 This concept was used in \cite{BPW2018} for a generalization of John von~Neumann Theorem on permutations of two sequences with the same set of cluster points~\cite{Neumann1935}. The centers of
distances of ultrametric
spaces generated by labeled trees are found in \cite{DR2026MDPI}.
Some interesting properties of the center of distances are proved in \cite{Bartoszewicz2018-1,Bartoszewicz2018,Banakiewicz2023,Kula2024Achievement,NPT2025}, but the authors are not aware of any results describing the properties of these centers in the case of arbitrary ultrametric  spaces.

In the present paper we investigate the center of distances of finite ultrametric spaces. 
The main purpose of the paper is to provide  a solution to the following problem.
\begin{Problem}\label{pr1}
Let $n$ be a positive integer number. Find the smallest integer $M(n)$ satisfying the inequality
\[
|C(X)| \leq M(n)
\]
for each finite ultrametric spaces $(X,d)$ which have at most $n$ points.
\end{Problem}

The paper is organized as follows. The next section contains some definitions and facts from the theory of metric spaces and graph theory. Theorem~\ref{ter1} of Section \ref{sec3} gives a solution of Problem~\ref{pr1} and it is the main result of the paper. In the final Section \ref{sec4} we formulate three conjectures connected with the center of distances of finite ultrametric spaces.

\section{Preliminaries}

Let us start from the basic concept of metric space.

In what follows we will use the symbol $\mathbb R^+$ to denote the set $[0,+\infty)$.

\begin{Definition}\label{deff1}Let $X$ be a non-empty set. A {\it metric} on $X$ is a symmetric function $d\colon X\times X \to \mathbb R^+$ satisfying the equality 
$d(x,x)=0$ for each $x\in X$, the inequality $d(x,y)>0$ for all distinct $x,y\in X$, and  the triangle inequality
 \[d(x,y)\leq d(x,z) + d(z,y)\]
 for all \(x\), \(y \), \(z \in X\).
\end{Definition}

A metric space \((X, d)\) is called an \emph{ultrametric space} if the \emph{strong triangle inequality}
\[
d(x,y)\leq \max \{d(x,z),d(z,y)\}
\]
holds for all \(x\), \(y\), \(z \in X\). In this case, the metric \(d\) is  called an ultrametric on \(X\).

\begin{Definition}\label{eddrr}
Let $(X,d)$ and $(Y,\rho)$ be metric spaces. A  mapping
\(
\Phi \colon X \to Y
\)
is said to be an {\it isometry} if $\Phi$ is bijective and the equality
\[
d(x,y) = \rho(\Phi(x), \Phi(y))
\]
holds for all $x,y \in X$. 
\end{Definition}

Two metric spaces are \emph{isometric} if there exists an
isometry of these spaces.

Let $A$ be a non-empty subset of a metric space $(X,d)$. The quantity
\begin{equation}\label{lokias}
    \operatorname{diam} A := \sup\{ d(x,y) : x,y \in A \}
\end{equation}
is the \emph{diameter} of $A$.  The \emph{open ball} with a \emph{radius} \(r > 0\) and a \emph{center} \(c \in X\) is the set
\begin{equation}
    \label{02}
B_r(c): = \{x \in X \colon d(c, x) < r\}.
\end{equation}

The following proposition claims that every point of an arbitrary open ball  in $(X,d)$ is a center of that ball if $(X,d)$ is ultrametric.

\begin{Proposition}\label{typs}
Let $(X,d)$ be an ultrametric space. 
Then the equality
\begin{equation*}
B_r(c) = B_r(a)
\end{equation*}
holds for each open ball \( B_r(c) \subseteq X \) and every point \( a \in B_r(c) \).
\end{Proposition}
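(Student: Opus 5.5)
The plan is to prove the two inclusions $B_r(a) \subseteq B_r(c)$ and $B_r(c) \subseteq B_r(a)$ directly from the strong triangle inequality. Throughout, fix an open ball $B_r(c)$ in the ultrametric space $(X,d)$ and a point $a \in B_r(c)$, so that $d(c,a) < r$ by definition \eqref{02}. By symmetry of $d$, this also gives $d(a,c) < r$.

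For the inclusion $B_r(a) \subseteq B_r(c)$, I will take an arbitrary $x \in B_r(a)$, so $d(a,x) < r$. The strong triangle inequality, applied with $a$ as the intermediate point, gives
\[
d(c,x) \le \max\{d(c,a), d(a,x)\} < r .
\]
Hence $x \in B_r(c)$.

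For the reverse inclusion $B_r(c) \subseteq B_r(a)$, I will take an arbitrary $x \in B_r(c)$, so $d(c,x) < r$. Using $c$ as the intermediate point,
\[
d(a,x) \le \max\{d(a,c), d(c,x)\} < r ,
\]
so $x \in B_r(a)$.

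The two inclusions together give $B_r(c) = B_r(a)$. There is no real obstacle here. The only point needing care is that the maximum of two numbers, each strictly less than $r$, is itself strictly less than $r$; this is exactly what makes the strict inequality of open balls survive the argument. That is also the step that would fail for a general metric space, where one only gets $d(c,x) < 2r$.
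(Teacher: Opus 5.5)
Your proof is correct: both inclusions follow from the strong triangle inequality exactly as you wrote, and the point that makes it work is that the maximum of two quantities strictly below $r$ is still strictly below $r$. The paper gives no argument of its own here and only cites Proposition 18.4 of Schikhof's book; your two-inclusion argument is the standard proof of that fact, so it makes the statement self-contained.
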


\begin{proof} It follows from  Proposition 18.4 of book \cite{Sch1985}. 
\end{proof}

Let us recall some definitions and facts.

A \textit{graph} is a pair $(V, E)$ consisting of a non-empty set $V$ and a set $E$ whose elements are unordered pairs $\{u, v\}$ of different points $u, v \in V$. For a graph $G = (V, E)$, the sets $V = V(G)$ and $E = E(G)$ are called \textit{the set of vertices} and \textit{the set of edges}, respectively. A graph $G$ is \textit{finite} if $V(G)$ is a finite set. If $\{x, y\} \in E(G)$ holds, then the vertices $x$ and $y$ are called \textit{adjacent} and we say that $x$, $y$ are the {\it ends} of the edge $\{x,y\}$. In what follows, we will always assume that $E(G) \cap V(G) = \emptyset$.

Let $v$ be a vertex of a graph $G$.
The cardinal number of the set
\[
\{u \in V(G) : \{u,v\} \in E(G)\}
\]
is the {\it degree} of $v$. The degree of $v$ will be denoted by $\delta_G(v)$,
\begin{equation}
    \label{zbh0}
\delta_G(v) := \left|\{u \in V(G) : \{u,v\} \in E(G)\}\right|.
\end{equation}

Let $G$ be a graph.
A graph \(G_1\) is a \emph{subgraph} of \(G\) if
\[
V(G_1) \subseteq V(G) \quad \text{and} \quad E(G_1) \subseteq E(G).
\]
In this case, we will write \(G_1 \subseteq G\).

A \emph{path} is a finite graph \(P\) whose vertices can be numbered without repetitions so that
\begin{equation}\label{e3.3-1}
V(P) = \{x_1, \ldots, x_k\} \quad \text{and} \quad E(P) = \{\{x_1, x_2\}, \ldots, \{x_{k-1}, x_k\}\}
\end{equation}
with \(k \geqslant 2\). We will write \(P = (x_1, \ldots, x_k)\) or \(P = P_{x_1, x_k}\) if \(P\) is a path satisfying \eqref{e3.3-1} and {say} that \(P\) is a \emph{path joining \(x_1\) and \(x_k\)}. A graph \(G\) is \emph{connected} if, for every two distinct vertices of \(G\), there is a path \(P \subseteq G\) joining these vertices.

A finite graph $C$ is a \textit{cycle} if there exists an enumeration of its vertices without repetitions such that $V(C) = \{x_1, \ldots, x_n\}$ and
\begin{equation*}
E(C) = \{\{x_1, x_2\}, \ldots, \{x_{n-1}, x_n\}, \{x_n, x_1\}\} \quad \text{with } n \geq 3.
\end{equation*}

\begin{Definition}\label{szaskk}
A connected graph  without cycles is called a \textit{tree}. 
\end{Definition}

In what follows, we only consider the finite trees that have at least two distinct vertices.

\begin{Proposition}
\label{jgds}
Let $G$ be a connected finite graph with $|V(G)|\geq 2$. Then the following conditions are equivalent.
\begin{enumerate}
    \item[(i)] $G$ is a tree.

        \item[(ii)] The equality 
\begin{equation*}
    |V(G)|=1+|E(G)|
\end{equation*}
holds.

            \item[(iii)] Any two distinct vertices of $G$ are joined by a unique path in $G$.
\end{enumerate}
\end{Proposition}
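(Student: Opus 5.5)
We prove the cycle of implications (i)$\Rightarrow$(iii)$\Rightarrow$(ii)$\Rightarrow$(i), using induction on $|V(G)|$ for the counting statements. The main tool is the existence of a leaf, i.e.\ a vertex of degree $1$. We obtain leaves from a longest path: if $P=(x_1,\ldots,x_k)$ is a path of maximal length in a finite connected graph $G$ with $|V(G)|\ge 2$ and no cycles, then every neighbor of $x_1$ lies on $P$, by maximality. If $x_1$ had a neighbor $x_j$ with $j\ge 3$, then $(x_1,\ldots,x_j)$ together with the edge $\{x_j,x_1\}$ would be a cycle. Hence $\delta_G(x_1)=1$.

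For (i)$\Rightarrow$(iii), existence of a path comes from connectedness. For uniqueness, suppose two distinct paths $P$ and $Q$ join $u$ and $v$. Follow them from $u$ until they first diverge, at vertex $a$. Then follow $P$ until it first meets $Q$ again, at vertex $b$. The segment of $P$ from $a$ to $b$ and the segment of $Q$ from $b$ back to $a$ share only their endpoints, so together they form a cycle, contradicting (i). The delicate point is to have at least $3$ vertices in this cycle. If both segments had length $1$, they would be the same edge $\{a,b\}$, which contradicts the choice of $a$ as the point of divergence. So the construction does give a cycle.

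For (iii)$\Rightarrow$(i), a cycle $(x_1,\ldots,x_n)$ would yield two distinct paths joining $x_1$ and $x_2$: the edge $\{x_1,x_2\}$ and the path $(x_2,x_3,\ldots,x_n,x_1)$. This contradicts (iii). Hence (i) and (iii) are equivalent, and the main work is relating them to (ii).

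For (i)$\Rightarrow$(ii), we induct on $|V(G)|$; the case $|V(G)|=2$ is immediate. In general, remove a leaf $x$ and its unique edge. The resulting graph is still connected, since no path between two other vertices passes through a vertex of degree $1$, and it is still acyclic. Each step lowers both $|V|$ and $|E|$ by one, so the induction hypothesis gives $|V(G)|=1+|E(G)|$.

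For (ii)$\Rightarrow$(i), we first show that every finite connected graph satisfies $|E(G)|\ge |V(G)|-1$. Suppose $G$ is connected and contains a cycle. Deleting one edge of that cycle keeps the graph connected, because that edge can be rerouted around the rest of the cycle. Repeating this deletion, we reach a connected acyclic spanning subgraph, i.e.\ a tree on the same vertex set. By the previous step this tree has exactly $|V(G)|-1$ edges. If $G$ had a cycle, at least one edge would have been deleted, giving $|E(G)|>|V(G)|-1$ and contradicting (ii). So $G$ is acyclic and hence a tree.

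The main obstacle is the careful extraction of a genuine cycle in (i)$\Rightarrow$(iii). The remaining steps are standard bookkeeping.
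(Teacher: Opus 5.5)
Your proof is correct. The paper gives no argument of its own here; it only cites Diestel's textbook (Theorem 1.5.2 and Corollary 1.5.3), and what you wrote is essentially that standard textbook proof. It runs through the same three steps: extracting a cycle from two distinct paths, counting edges by induction on the number of vertices (you prune leaves, Diestel builds the tree up one vertex at a time, which is the same induction run backwards), and deleting cycle edges to get a spanning tree for (ii)$\Rightarrow$(i).

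One detail you leave unstated is that the re-meeting vertex $b$ lies after $a$ on $Q$. This holds because $P$ and $Q$ coincide up to $a$ and $P$ repeats no vertex, so $b$ cannot equal $a$ or any earlier vertex of $Q$.
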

For a proof, see, for example, Theorem 1.5.2 and Corollary 1.5.3 in \cite{Diestel2017}.

\begin{Definition}
    \label{rguij}
Let $T$ be a tree. A vertex $v\in V(T)$  is  called a {\it leaf}  of $T$ if 
\[
\delta_T(v)=1.
\]

If a vertex $v$ is not a leaf of $T$, then we say that $v$ is an {\it internal node} of $T$.
\end{Definition}

A tree $T$ may have a distinguished vertex $r$ called the {\it root}; in this case $T$ is called a {\it rooted tree} and we write $T=T(r)$.

Let $T=T(r)$ be a rooted tree and let $v\in V(T)$. Similarly \cite{Dov2019pNUAA,DP2018pNUAA,DP2019PNUAA,DP2020pNUAA,DPT2015,DPT2017FPTA}, we will denote by $\delta_T^{+}(v)$ the {\it out-degree} of $v$,
\begin{equation}
    \label{zbh1}
\delta_T^{+}(v):=
\begin{cases}
\delta_T(v), & \text{if } v=r,\\
\delta_T(v)-1, & \text{if } v\ne r,
\end{cases}
\end{equation}
where $\delta_T(v)$ is the degree of $v$ defined by \eqref{zbh0} with $G=T$.

\begin{Remark}
    \label{efgyh}
Definition \ref{rguij} and equalities \eqref{zbh0}, \eqref{zbh1} imply that $r$ is a leaf of a rooted tree $T=T(r)$ if and only if $\delta_T^{+}(r)= 1$. Moreover, for the case when $v \in V(T)$ and $v \neq r$, the vertex $v$ is a leaf of $T(r)$ if and only if 
$\delta_T^{+}(v)=0$.
\end{Remark}

\begin{Definition}\label{saqh}
Let $T=T(r)$ be a rooted tree and let $u,v$ be distinct vertices of $T$. 
The vertex $u$ is called the {\it direct successor} of $v$ if
\(
u \in V(T)\setminus\{r\},\) \( \{u,v\}\in E(T),
\)
and
\(
v \in V(P_{u,r}),
\)
where $P_{u,r}$ is the path joining the vertex $u$ with the root $r$ in $T$.
\end{Definition}

\begin{Remark}\label{mnbffss} Let $T=T(r)$ be a rooted tree. Condition $(iii)$ of Proposition \ref{jgds} shows that for each vertex $u\in V(T)\setminus\{r\}$ there is the unique vertex $v\in V(T)$ such that $u$ is a direct successor of $v$.
\end{Remark}

Using the concept of direct successor, we can define the notion of the {\it level} of the vertices of a rooted tree by the following rule.

Let $T =T(r)$ be a rooted tree. Then, by definition, a vertex $u\in V(T)\setminus\{r\}$ has the first level if $u$ is a direct successor of $r$. A vertex $u \in V(T)\setminus\{r\}$ has the second level if $u$ is a direct successor of a vertex of the first level.

In general case, a vertex $u \in V(T)\setminus\{r\}$ has the level $n \geq 2$ if there is a vertex $v \in V(T)$ such that $v$ has the level $n-1$ and $u$ is a direct successor of $v$.

\begin{Definition} \label{outff} A {\it labeled tree} $T=T(l)$ is a tree $T$ together with a labeling $l:V(T)\to \mathbb{R}^{+}$. A {\it labeled rooted tree} $T(r,l)$  is a rooted tree $T(r)$ endowed with given labeling $l$.
\end{Definition}

Thus we assume that the labels on the vertices of trees are some non-negative numbers.

\begin{Definition}\label{odwd}
Let $G$ and $H$ be graphs. A bijection $f: V(G) \to V(H)$ is called an {\it isomorphism}
of $G$ and $H$ if the logical equivalence
\[
(\{u,v\}\in E(G)) \iff (\{f(u),f(v)\}\in E(H))
\]
is valid for all $u,v\in V(G)$. Two graphs are {\it isomorphic} if there
exists an isomorphism of these graphs. 
\end{Definition}

The above definition needs to be modified if the graphs are provided with any additional structure.

\begin{Definition}\label{qwwmgg}
Let $T_1=T_1(r_1,l_1)$ and $T_2=T_2(r_2,l_2)$ be labeled rooted trees. A mapping $f: V(T_1)\to V(T_2)$  is called an 
{\it isomorphism of $T_1(r_1,l_1)$ and $T_2(r_2,l_2)$} if $f$ is an isomorphism of the free (unrooted, without labelings) trees $T_1$ and $T_2$ and, in addition, we have 
\begin{equation*}
    f(r_1)=r_2, \quad l_1(v)=l_2(f(v))
\end{equation*}
for each
$v\in V(T_1)$.
 The labeled rooted trees 
are isomorphic if there is an isomorphism of these trees.
\end{Definition}

The following concept is well-known  (see, for example, \cite[p.~17]{Diestel2017}). 

\begin{Definition}\label{scfr} Let $G$ be a finite graph and let $k \geq 2$ be an integer number. The graph $G$
is called {\it complete $k$-partite} if the vertex set $V(G)$ can be partitioned into $k$ non-empty
disjoint parts, in such a way that no edge has both ends in the same part and
any two vertices of different parts are adjacent.
\end{Definition}

The next definition was introduced in~\cite{PD2014JMS} for characterization of finite ultrametric spaces $(X,d)$ for which the Gomory--Hu inequality~\cite{GH1961S},
\begin{equation*}
    |D(X)|\leq |X|,
\end{equation*}
turns to the equality
\begin{equation*}
    |D(X)|=|X|.
\end{equation*}

\begin{Definition}\label{dvhj} Let $(X,d)$ be a finite metric space with $|X| \geq 2$. A graph  $G_{X}$ is called the {\it diametrical graph} of $(X,d)$ if  
\[
V(G_{X}) = X
\]
 and 
\begin{equation}
    \label{ojhr}
(\{u,v\} \in E(G_{X})) \iff (d(u,v) = \operatorname{diam} X)
\end{equation}
for all $u, v \in X$.
\end{Definition}

The following theorem  directly follows from Theorem 3.2 of \cite{DDP2011pNUAA} and Proposition 3.3 of \cite{BDK2022TAG}.

\begin{Theorem}\label{ref} Let $(X,d)$ be a finite ultrametric space with $|X| \geq 2$. Then  the  diametrical graph $G_{X}$ is complete $k$-partite with $k\geq 2$. Moreover, every part of the diametrical graph $G_{X}$ is an open ball in $(X,d)$ with the radius $r=\operatorname{diam} X$. Conversely, every open ball $B_r(c)\subseteq X$ with $r=\operatorname{diam} X$ and $c\in X$ is a part of $G_{X}$.
\end{Theorem}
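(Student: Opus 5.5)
The statement in question is Theorem~\ref{ref}, which the paper attributes to earlier work. Nevertheless, here is how I would prove it directly from the strong triangle inequality and Proposition~\ref{typs}. Set $r=\operatorname{diam} X$; since $|X|\ge 2$ and $X$ is finite, $r>0$. Let $\mathcal B$ be the family of all open balls $B_r(c)$, $c\in X$.

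\textbf{The balls partition $X$.} Every point $c$ lies in $B_r(c)$, so the balls cover $X$. If $B_r(c)\cap B_r(c')\neq\emptyset$, pick $a$ in the intersection. Proposition~\ref{typs} then gives $B_r(c)=B_r(a)=B_r(c')$. Hence $\mathcal B$ is a partition of $X$ into non-empty parts.

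\textbf{Edges are exactly the pairs in different parts.} For $u,v\in X$ we always have $d(u,v)\le r$. So $d(u,v)=r$ holds if and only if $d(u,v)\not< r$, that is, if and only if $v\notin B_r(u)$. By the previous step, $B_r(u)$ is the part containing $u$. Thus $\{u,v\}\in E(G_X)$ precisely when $u$ and $v$ lie in different members of $\mathcal B$. By \eqref{ojhr}, this shows that no edge has both ends in one part and that any two vertices from different parts are adjacent.

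\textbf{There are at least two parts.} Since $X$ is finite with $|X|\ge2$, the supremum in \eqref{lokias} is attained: there are $x,y$ with $d(x,y)=r$. Then $y\notin B_r(x)$, so $|\mathcal B|=k\ge 2$, and $G_X$ is complete $k$-partite in the sense of Definition~\ref{scfr}.

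\textbf{Identification of the parts.} The partition of a complete $k$-partite graph is determined by the graph itself: two vertices lie in the same part if and only if they are non-adjacent (or equal). Hence the parts of $G_X$ are exactly the members of $\mathcal B$. This means every part is an open ball of radius $\operatorname{diam}X$, and conversely every such ball $B_r(c)$ is a part.

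The only step that needs care is this last uniqueness remark, which guarantees that ``the parts'' of $G_X$ is well defined. It follows at once because non-adjacency is an equivalence relation on a complete multipartite graph. Everything else is immediate from Proposition~\ref{typs} and the fact that $d\le\operatorname{diam}X$.
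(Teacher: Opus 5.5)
Your proof is correct and complete. The route differs from the paper's only because the paper gives no argument. It obtains Theorem~\ref{ref} by combining Theorem~3.2 of \cite{DDP2011pNUAA}, which gives complete multipartiteness of $G_X$, with Proposition~3.3 of \cite{BDK2022TAG}, which describes the parts as balls of radius $\operatorname{diam} X$.

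You instead build the partition of $X$ into balls $B_r(c)$ first, using only Proposition~\ref{typs}. You then read off both the complete $k$-partite structure and the identification of the parts in one pass. Your closing remark matters: the parts of a complete multipartite graph are exactly the classes of the relation ``equal or non-adjacent''. This is what makes ``the parts of $G_X$'' well defined. The paper relies on it tacitly whenever it refers to the parts of $G_X$, for instance in the construction of the representing tree.

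The citation route keeps the paper short and points to where these facts are established in the literature. Your route makes the section self-contained apart from Proposition~\ref{typs}. It also shows that the essential use of finiteness is that the supremum in \eqref{lokias} is attained, which is exactly what forces $k\ge 2$.
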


For every finite ultrametric space $(X,d)$ with $|X|\geq 2$ we can associate a labeled rooted tree
$T_X = T_X(r_X,l_X)$ such that $r_X:= X$ and $l_X : V(T_X) \to \mathbb{R}^{+}$ is defined by the following rule (see~\cite{PD2014JMS}).

According to Theorem~\ref{ref} the diametrical graph $G_X$ is a complete $k$-partite graph with $k\geq 2$. Let $X_1,\ldots,X_k$ be the parts of $G_X$.
Then, by definition, the nodes of the first level of $T_X$ are the sets
$X_1, \ldots, X_k$ and we define the labels of these nodes as
\begin{equation}\label{gfuika}
l_X(X_i): = \operatorname{diam} X_i,
\end{equation}
$i = 1,\ldots,k$. 
The nodes of the first level with the label $0$ are leaves, and those indicated by strictly
positive labels are internal nodes of the tree $T_X$. If all $X_1, \ldots, X_k$ are leaves,
then the tree $T_X$ is constructed. Otherwise, by repeating the above-described procedure
with the internal nodes $X_j$, $j\in \{1,\ldots,k\}$, instead of $X$, we obtain the nodes of the second level, etc.
Since $X$ is finite, all vertices on some level will be leaves, and the construction of
$T_X$ is completed.

The above-constructed labeled rooted tree $T_X$ is called the \textit{representing tree}
of the ultrametric space $(X,d)$.

\begin{Lemma}\label{swrgh}
Let $(X,d)$ be a finite ultrametric space, let $T_X=T_X(r_X,l_X)$ be the representing tree of $(X,d)$, and let $x_1$ and $x_2$ be two distinct points of $X$.
If $(\{x_1\}, v_1, \ldots, v_n, \{x_2\})$ is the path joining the leaves $\{x_1\}$ and $\{x_2\}$ in $T_X$, then the equality
\begin{equation}\label{sant}
d(x_1,x_2)=\max_{1\le i\le n} l_X(v_i)
\end{equation}
holds.
\end{Lemma}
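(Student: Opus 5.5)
The plan is to identify the vertex of largest label on the path as the smallest node of $T_X$ that contains both $x_1$ and $x_2$, and to show that its label equals $d(x_1,x_2)$.

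\emph{Structure of $T_X$.} By construction, every vertex of $T_X$ is a subset of $X$. Every direct successor of a vertex $v$ is a part of the diametrical graph of $v$. Hence:
\begin{itemize}
\item[(a)] the successors of $v$ partition $v$ into proper subsets;
\item[(b)] the leaves are exactly the singletons $\{x\}$, $x\in X$;
\item[(c)] for each internal node $v$ we have $l_X(v)=\operatorname{diam} v$, and each successor $w$ of $v$ satisfies $\operatorname{diam} w<\operatorname{diam} v$.
\end{itemize}
For (c), the equality $l_X(v)=\operatorname{diam} v$ is the rule \eqref{gfuika}; for $v=r_X=X$ I take $l_X(r_X)=\operatorname{diam} X$, which is the intended convention. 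The strict inequality holds because, by Theorem~\ref{ref}, $w$ is an open ball of radius $\operatorname{diam} v$ in $v$. Consequently the labels strictly decrease along any path going away from the root, and the set of vertices containing a given point $x$ is exactly the path from $\{x\}$ to $r_X$.

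\emph{Locating the top vertex.} Let $P_1$ and $P_2$ be the paths from $\{x_1\}$ and $\{x_2\}$ to the root. By (iii) of Proposition~\ref{jgds}, the path joining $\{x_1\}$ and $\{x_2\}$ consists of $P_1$ and $P_2$ up to their first common vertex $v^*$. This $v^*$ is the minimal node containing both $x_1$ and $x_2$; it exists since $X$ contains both points and, by (a), the nodes containing a given point form a chain. Then $v^*=v_j$ for some $j$. Since every other $v_i$ lies on $P_1$ or $P_2$ strictly below $v^*$, the monotonicity in (c) gives $\max_i l_X(v_i)=l_X(v^*)=\operatorname{diam} v^*$.

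\emph{Computing the distance.} By minimality of $v^*$ (and $x_1\neq x_2$), the points $x_1$ and $x_2$ lie in different direct successors of $v^*$, that is, in different parts of the diametrical graph $G_{v^*}$. Theorem~\ref{ref}, applied to the finite ultrametric space $v^*$, says $G_{v^*}$ is complete $k$-partite. Hence $x_1$ and $x_2$ are adjacent, so $d(x_1,x_2)=\operatorname{diam} v^*$ by \eqref{ojhr}, which gives \eqref{sant}.

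I expect the main obstacle to be the bookkeeping in the first step. One must check that the recursive construction really yields a tree whose vertex set is a family of subsets nested along root paths, with the label of each node equal to its diameter. In particular, the root label is not stated explicitly in the construction and has to be taken as $\operatorname{diam} X$.
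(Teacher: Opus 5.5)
Your proof is correct and follows essentially the same route as the argument the paper defers to (Lemma~3.2 of \cite{PD2014JMS}). There, as in your proof, the vertex of the path nearest the root contains both points, $x_1$ and $x_2$ lie in different parts of its diametrical graph so that $d(x_1,x_2)$ equals its diameter, and labels strictly decrease away from the root; your convention $l_X(r_X)=\operatorname{diam} X$ is indeed the intended one, as Example~\ref{kar92} shows.
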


The proof of Lemma \ref{swrgh} is completely similar to the proof of Lemma~3.2 from~\cite{PD2014JMS}.

\begin{Example}\label{kar92}
Let us consider the four-point ultrametric spaces $(X_4,d_4)$, $(Y_4,\rho_4)$ and labeled rooted trees $T_{X_4}$, $T_{Y_4}$ depicted in Figure~\ref{fig1}. Then $T_{X_4}$, $T_{Y_4}$ are the representing trees of $(X_4,d_4)$, $(Y_4,\rho_4)$ and, moreover, the sets $\{0,3\}$ and $\{0,2,3\}$ are the centers of distances of the spaces 
$(X_4,d_4)$ and $(Y_4, \rho_4)$,
\begin{equation*}
    C(X_4)=\{0,3\}, \quad C(Y_4)=\{0,2,3\}.
\end{equation*}
\end{Example}

\begin{figure}[ht]
\centering
\begin{tikzpicture}[remember picture]

\begin{scope}[xshift=-0.5cm]

\node[draw=none] at (2.7,2.7) {$3$};
\node[draw=none] at (0.5,3.2) {$(X_4,d_4)$};

\node[draw, circle, fill=black, inner sep=1pt] (A) at (0,0) {};
\node[draw, circle, fill=black, inner sep=1pt] (B) at (3.2,-0.3) {};
\node[draw, circle, fill=black, inner sep=1pt] (C) at (-0.1,1.5) {};
\node[draw, circle, fill=black, inner sep=1pt] (D) at (3,2) {};

\draw (A) -- (B) node[midway, below] {$3$};
\draw (A) -- (C) node[midway, left] {$1$};
\draw (A) -- (D) node[midway, right] {$3$};
\draw (C) -- (D) node[midway, above] {$3$};
\draw (B) -- (D) node[midway, right] {$2$};
\draw (B) to[out=45,in=45,looseness=2] (C);

\begin{scope}[xshift=6cm]

\node[draw=none] at (2.7,2.7) {$3$};
\node[draw=none] at (0.5,3.2) {$(Y_4,\rho_4)$};

\node[draw, circle, fill=black, inner sep=1pt] (A) at (0,0) {};
\node[draw, circle, fill=black, inner sep=1pt] (B) at (3.8,0.3) {};
\node[draw, circle, fill=black, inner sep=1pt] (C) at (-0.5,1.6) {};
\node[draw, circle, fill=black, inner sep=1pt] (D) at (3,2) {};

\draw (A) -- (B) node[midway, below] {$3$};
\draw (A) -- (C) node[midway, left] {$2$};
\draw (A) -- (D) node[midway, right] {$3$};
\draw (C) -- (D) node[midway, above] {$3$};
\draw (B) -- (D) node[midway, right] {$2$};
\draw (B) to[out=45,in=45,looseness=1.5] (C);

\end{scope}

\end{scope}

\begin{scope}[xshift=1.4cm,yshift=-2cm,
level distance=1.5cm,
level 1/.style={sibling distance=3cm},
level 2/.style={sibling distance=1.5cm},
every node/.style={circle,draw,minimum size=4mm,inner sep=1pt}
]

\node at (0,0) {3}
    child {node {1}
        child {node {0}}
        child {node {0}}
    }
    child {node {2}
        child {node {0}}
        child {node {0}}
    };

\node[draw=none] at (-1.5,0.4) {$T_{X_4}$};

\begin{scope}[xshift=6cm]

\node at (0,0) {3}
    child {node {2}
        child {node {0}}
        child {node {0}}
    }
    child {node {2}
        child {node {0}}
        child {node {0}}
    };

\node[draw=none] at (-1.5,0.4) {$T_{Y_4}$};

\end{scope}

\end{scope}

\end{tikzpicture}
\vspace{0.7cm}
\caption{The four-point ultrametric spaces $(X_4,d_4)$ and $(Y_4,\rho_4)$ and their representing trees.}
\label{fig1}
\end{figure}
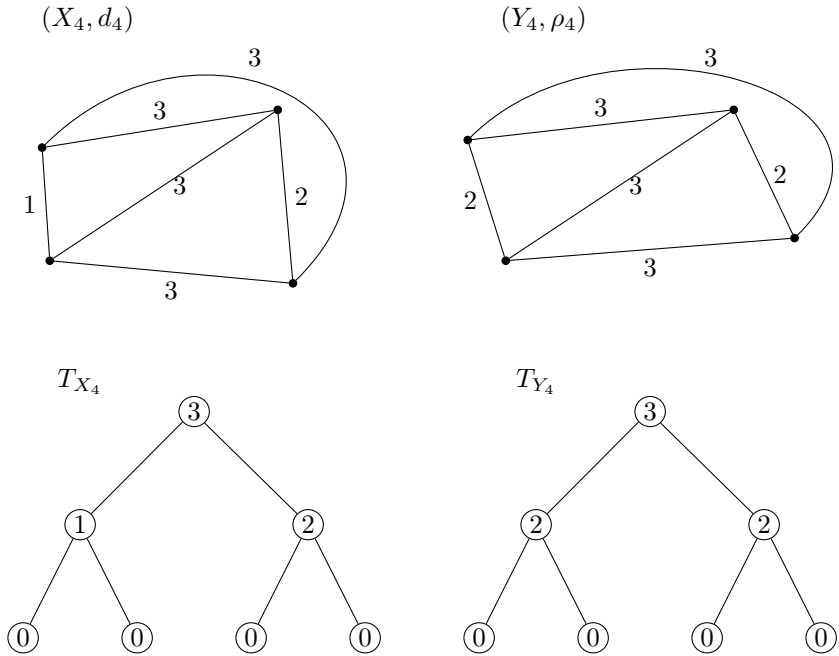

The following theorem gives the necessary and sufficient conditions under which labeled rooted trees are isomorphic to  representing trees of finite ultrametric spaces.

\begin{Theorem}\label{edftt}
Let $T = T(r,l)$ be a finite labeled rooted tree with $|V(T)|\geq 3$. Then the following two conditions are equivalent.
\begin{enumerate}
\item[(i)] For every $v \in V(T)$ we have $\delta_T^{+}(v) \ne 1$, the logical equivalence
\[
(\delta_T^{+}(v)=0) \iff (l(v)=0)
\]
is valid and, in addition, the inequality
\(
l(u) < l(v)
\)
holds whenever $u$ is a direct successor of $v$.
\item[(ii)] There exists a finite  ultrametric space $(X,d)$ with $|X|\geq 2$ such that $T(r,l)$ and the representing tree $T_X=T_X(r_X,l_X)$ are isomorphic as labeled rooted trees.
\end{enumerate}
\end{Theorem}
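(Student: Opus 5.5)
We prove the two implications separately.

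\emph{(ii)$\Rightarrow$(i).} It suffices to check the three properties of condition~(i) for the representing tree $T_X$ itself, since they are preserved by isomorphisms of labeled rooted trees (Definition~\ref{qwwmgg}); an isomorphism preserves degrees, fixes the root and preserves labels, so it preserves out-degrees and direct successors.

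\begin{itemize}
\item \emph{No vertex has out-degree $1$.} Every internal node $Y$ of $T_X$ is a subset of $X$ with $|Y|\ge 2$. Its direct successors are the parts of the diametrical graph $G_Y$, and there are $k\ge 2$ of them by Theorem~\ref{ref}. Every leaf has out-degree $0$, because by construction leaves are only produced as non-root vertices. The root $X$ is not a leaf since $|X|\ge 2$.
\item \emph{Out-degree $0$ iff label $0$.} By construction, a vertex $Y$ has label $\operatorname{diam} Y$, and it is a leaf exactly when $\operatorname{diam} Y=0$, i.e.\ when $Y$ is a singleton. For the root, $l_X(X)=\operatorname{diam} X>0$, consistent with the out-degree being at least $2$.
\item \emph{Labels strictly decrease.} If $Y_i$ is a direct successor of $Y$, then $Y_i$ is a part of $G_Y$, hence an open ball $B_r(c)$ in $Y$ with $r=\operatorname{diam} Y$. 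Every distance inside $Y_i$ is therefore $<\operatorname{diam} Y$, and since $Y_i$ is finite, $\operatorname{diam} Y_i<\operatorname{diam} Y$.
\end{itemize}

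\emph{(i)$\Rightarrow$(ii).} Given $T(r,l)$ satisfying~(i), let $X$ be the set of leaves of $T$, excluding $r$. The root has out-degree $\ge 2$ here: since $|V(T)|\ge 3$ we have $\delta_T^+(r)\neq 0$, and $\delta^+_T(r)\neq 1$ by~(i). Define
\[
d(x,y)=\max_{v\in V(P_{x,y})} l(v)
\]
for distinct leaves $x,y$, and $d(x,x)=0$. Equivalently, $d(x,y)=l(\operatorname{lca}(x,y))$, since labels strictly increase toward the root. Then:
\begin{itemize}
\item \emph{Positivity.} The lowest common ancestor of distinct leaves is an internal node, whose label is positive by~(i).
\item \emph{Strong triangle inequality.} For three leaves $x,y,z$, among the three pairwise lowest common ancestors two coincide and the third lies below them. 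Monotonicity of labels then gives $d(x,y)\le\max\{d(x,z),d(z,y)\}$.
\end{itemize}

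It remains to show that $T_X$ is isomorphic to $T$. Map each vertex $v$ of $T$ to the set $X_v$ of leaves descending from $v$; the root goes to $X_r=X$. We prove by induction on level that the vertices of $T$ correspond to the vertices of $T_X$, with matching labels and successor relations.

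The key claim is the following. For an internal vertex $v$ with children $u_1,\dots,u_k$, where $k\ge 2$:
\begin{itemize}
\item $\operatorname{diam} X_v=l(v)$, since two leaves below different children have lowest common ancestor $v$, and pairs below the same child have distance at most that child's label, which is $<l(v)$;
\item the parts of $G_{X_v}$ are exactly $X_{u_1},\dots,X_{u_k}$, since two points of $X_v$ are at distance $l(v)$ iff their lowest common ancestor is $v$, i.e.\ iff they lie below different children.
\end{itemize}
Hence $l_X(X_u)=\operatorname{diam} X_u=l(u)$ for every vertex $u$; for leaves, $X_u$ is a singleton and both labels are $0$.

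The main care point is that $v\mapsto X_v$ is injective. This is where $\delta^+\ne 1$ is essential: without it, a vertex and its unique child would define the same set $X_v$. With that condition in hand, the rest of the induction is routine.
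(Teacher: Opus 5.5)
Your proof is correct, but it takes a different route from the paper, because the paper gives no argument: it only says that Theorem~\ref{edftt} is a special case of Theorem~2.7 of \cite{DP2019PNUAA}. You instead give a self-contained proof that uses only Theorem~\ref{ref} and elementary facts about rooted trees.

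For (ii)$\Rightarrow$(i) you read the three conditions directly off the construction of $T_X$. You take the root label to be $l_X(X)=\operatorname{diam}X$; this is the intended convention (cf.\ Figure~\ref{fig1} and Corollary~\ref{sefugi}), although \eqref{gfuika} is stated only for non-root nodes. For (i)$\Rightarrow$(ii) you realize $T$ on its own leaf set with $d(x,y)=l(\operatorname{lca}(x,y))$, which is exactly inverting the formula of Lemma~\ref{swrgh}. You then show that the parts of $G_{X_v}$ are the leaf sets of the children of $v$.

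The citation buys brevity and places the statement inside a more general known characterization. Your route shows where each clause of (i) is used:
\begin{itemize}
\item $l(v)=0\iff\delta^+_T(v)=0$ gives positivity of $d$ and matches leaves of $T$ with singleton leaves of $T_X$.
\item Strict monotonicity of labels gives the lca formula, $\operatorname{diam}X_v=l(v)$, and the identification of the diametrical parts.
\item $\delta^+_T\neq 1$ gives $k\ge 2$, so the diameter of $X_v$ is actually attained at $v$, and it gives injectivity of $v\mapsto X_v$.
\end{itemize}
Your construction also makes the later use of the theorem in Proposition~\ref{woorf} more concrete. There one may take $Y$ to be literally the leaf set of $T^*$, so $|Y|=1+|X|$ and the correspondence between leaves and points needs no further argument.

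The only detail you leave implicit is $|X|\ge 2$ in (i)$\Rightarrow$(ii). It follows at once because each of the at least two children of $r$ has a leaf below it.
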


Theorem \ref{edftt} is a special case of 
Theorem 2.7 of paper \cite{DP2019PNUAA}.

\section{The upper bound of number of points of $C(X)$}\label{sec3}

Let $(X,d)$ be a metric space,  let $p \in X$, and  
let $D_p(X)$ be a subset of 
the distance set $D(X)$
 defined as
\begin{equation}
    \label{fesdi6}
D_p(X) := \{ d(p,x) : x \in X \}. 
\end{equation}

The next proposition is known, see, for example, \cite{NPT2025}.

\begin{Proposition}\label{ugrdd}
Let $(X,d)$ be a metric space and let $C(X)$ be the center of distances of $(X,d)$. Then the equality 
\begin{equation*}
C(X) = \bigcap_{p \in X} D_p(X) 
\end{equation*}
holds.
\end{Proposition}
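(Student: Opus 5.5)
The statement is Proposition~\ref{ugrdd}: $C(X)=\bigcap_{p\in X}D_p(X)$. The plan is to prove it by double inclusion, unwinding Definition~\ref{zcgh572} and the definition \eqref{fesdi6} of $D_p(X)$. No property of the metric beyond its definition is used.

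For the inclusion $C(X)\subseteq\bigcap_{p\in X}D_p(X)$, I take $t\in C(X)$ and an arbitrary $p\in X$. By \eqref{edft34Gb} there is $x\in X$ with $d(p,x)=t$. By \eqref{fesdi6} this means $t\in D_p(X)$. Since $p$ was arbitrary, $t$ lies in the intersection.

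For the reverse inclusion, I take $t\in\bigcap_{p\in X}D_p(X)$. Since $X\neq\varnothing$, there is at least one $p_0\in X$. Then $t\in D_{p_0}(X)\subseteq D(X)$, because each $D_p(X)$ is a subset of $D(X)$; so $t\in D(X)$. For each $p\in X$ we have $t\in D_p(X)$, so there is $x\in X$ with $d(p,x)=t$. This is exactly the defining condition in \eqref{edft34Gb}, hence $t\in C(X)$.

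There is no real obstacle. The only point needing care is checking membership in $D(X)$ in the second inclusion, which uses that $X$ is non-empty; non-emptiness is part of Definition~\ref{deff1}.
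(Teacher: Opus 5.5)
Your proof is correct. The double inclusion unwinds \eqref{edft34Gb} and \eqref{fesdi6}, and you correctly single out the only delicate point: non-emptiness of $X$ is needed to get $\bigcap_{p\in X} D_p(X)\subseteq D(X)$. The paper gives no proof of its own here --- it cites the proposition as known from \cite{NPT2025} --- so your direct argument from the definitions is exactly the verification it relies on.
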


For finite ultrametric spaces $(X,d)$ we can also describe the sets $D_p(X)$ in terms of representing trees $T_X$.

\begin{Corollary}\label{sefugi}
Let $(X,d)$ be a finite  ultrametric space with $|X|\geq 2$ and let 
\(
T_X=T_X(r_X,l_X)
\)
be the representing tree of \((X,d)\).
Then for every $p\in X$  we have the equality
\begin{equation}
    \label{imgt}
D_p(X)=\{l_X(u) : u \in V(P_{\{p\},r_X})\},
\end{equation}
where \(P_{\{p\},r_X}\) is the path in \(T_X\) joining the leaf \(\{p\}\) with the root \(r_X\).
\end{Corollary}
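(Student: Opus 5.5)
To prove Corollary~\ref{sefugi} I will combine Lemma~\ref{swrgh} with the monotonicity of labels along root-to-leaf paths given by Theorem~\ref{edftt}.

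Fix $p\in X$ and write the path from the leaf to the root as $P_{\{p\},r_X}=(\{p\},u_1,\ldots,u_m)$, where $u_m=r_X$. Each $u_{j+1}$ is the parent of $u_j$, and $u_1$ is the parent of the leaf $\{p\}$. Since $T_X$ is a representing tree, condition (i) of Theorem~\ref{edftt} holds for it, either trivially or through the isomorphism in (ii). Therefore
\[
0=l_X(\{p\})<l_X(u_1)<\cdots<l_X(u_m)=\operatorname{diam}X .
\]
One can also check this directly from the construction: every part $X_i$ of $G_X$ is a proper subset of $X$ whose points are at distance less than $\operatorname{diam}X$, so $\operatorname{diam}X_i<\operatorname{diam}X$. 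The degenerate case, in which $T_X$ has only two vertices, cannot occur, because the diametrical graph has $k\ge 2$ parts, so $|V(T_X)|\ge 3$.

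For the inclusion ``$\subseteq$'', take $x\in X$. If $x=p$, then $d(p,x)=0=l_X(\{p\})$. If $x\neq p$, consider the path joining $\{p\}$ and $\{x\}$ in $T_X$. By uniqueness of paths (Proposition~\ref{jgds}(iii)), this path goes up from $\{p\}$ along $u_1,\ldots,u_j$, where $u_j$ is the lowest common ancestor of the two leaves, and then comes down to $\{x\}$ through descendants of $u_j$. All the descending vertices are proper descendants of $u_j$, so their labels are strictly smaller than $l_X(u_j)$. The ascending part has its maximum label at $u_j$ by the monotonicity above. Lemma~\ref{swrgh} then gives $d(p,x)=l_X(u_j)$, which is a label on $P_{\{p\},r_X}$.

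For the inclusion ``$\supseteq$'', the label $0=l_X(\{p\})$ equals $d(p,p)$. For $j\ge1$, the vertex $u_j$ is internal, so $\delta^+(u_j)\ge 2$ and $u_j$ has a child $w$ different from the child lying on the path toward $\{p\}$ (that child is $u_{j-1}$, or $\{p\}$ itself when $j=1$). Descend from $w$ to any leaf $\{x\}$ below it. The path joining $\{p\}$ and $\{x\}$ then has $u_j$ as its top vertex, and the same argument as before gives $d(p,x)=l_X(u_j)$.

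The main obstacle is identifying the path between two leaves correctly: it passes through their lowest common ancestor, and its maximum label is attained exactly there. Once the strict monotonicity of labels is in place, the rest is bookkeeping.
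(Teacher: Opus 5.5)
Your proof is correct and takes the same route as the paper. The paper's entire proof is the remark that the equality follows from the definition \eqref{fesdi6} of $D_p(X)$ and Lemma~\ref{swrgh}. What you add are the details that one-line proof leaves implicit. First, labels strictly decrease from a node to its direct successors, so the maximum in \eqref{sant} is attained at the lowest common ancestor, which lies on $P_{\{p\},r_X}$. Second, each internal node satisfies $\delta^{+}_{T_X}(u_j)\ge 2$, which you need to realize every label on that path as a distance $d(p,x)$.
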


\begin{proof}
It follows from \eqref{fesdi6} and Lemma \ref{swrgh}.
\end{proof}

The following lemma directly follows from Definition \ref{zcgh572}, Definition \ref{scfr} and Theorem \ref{ref}.

\begin{Lemma}\label{l31}
Let $(X,d)$ be a finite ultrametric space with $|X|\geq 2$. Then the inclusion
\begin{equation*}
\{0,\operatorname{diam} X \} \subseteq C(X)
\end{equation*}
holds.
\end{Lemma}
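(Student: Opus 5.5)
We have to prove Lemma \ref{l31}, the inclusion $\{0,\operatorname{diam} X\}\subseteq C(X)$. By Definition \ref{zcgh572}, this means showing two things: $0$ and $\operatorname{diam} X$ lie in $D(X)$, and for each $p\in X$ there are points $x,y\in X$ with $d(p,x)=0$ and $d(p,y)=\operatorname{diam} X$.

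\emph{The value $0$.} For every $p\in X$ take $x=p$. Then $d(p,p)=0$ by Definition \ref{deff1}. This also gives $0\in D(X)$.

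\emph{The value $\operatorname{diam} X$.} Since $X$ is finite, the supremum in \eqref{lokias} is attained at some pair of points, so $\operatorname{diam} X\in D(X)$. Because $|X|\geq 2$, we have $\operatorname{diam} X>0$. Now fix $p\in X$. By Theorem \ref{ref}, the diametrical graph $G_X$ is complete $k$-partite with $k\geq 2$, and its parts $X_1,\dots,X_k$ partition $X$. So $p\in X_i$ for some $i$. Since $k\geq 2$, there is an index $j\neq i$, and $X_j$ is non-empty by Definition \ref{scfr}; pick $y\in X_j$. The vertices $p$ and $y$ lie in different parts, so Definition \ref{scfr} gives $\{p,y\}\in E(G_X)$. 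Then \eqref{ojhr} yields $d(p,y)=\operatorname{diam} X$.

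Together these give both memberships in $C(X)$. Equivalently, in the language of Proposition \ref{ugrdd}, $0$ and $\operatorname{diam} X$ belong to every $D_p(X)$. The only step with real content is the existence of a diametrical partner for every point. This uses the fact that there are at least two parts, each of them non-empty, and that fact is supplied by Theorem \ref{ref}. Everything else is immediate from the definitions.
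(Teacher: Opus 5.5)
Your proof is correct and takes the same route as the paper. The paper simply says the lemma follows directly from Definition~\ref{zcgh572}, Definition~\ref{scfr} and Theorem~\ref{ref}, and you spell that out: $d(p,p)=0$ handles the value $0$, and a point $y$ in a part of the complete $k$-partite diametrical graph $G_X$ different from the part containing $p$ gives $d(p,y)=\operatorname{diam} X$.
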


The next proposition gives an equality expressing the center of distances of  finite ultrametric space $(X,d)$ in terms of centers of distances  of parts of the diametrical graph $G(X)$.

\begin{Proposition}\label{suul}
Let $(X,d)$ be a finite ultrametric space with $|X|\geq 2$ and let the diametrical graph $G(X)$ be complete $k$-partite with the parts $X_1,\ldots,X_k$.
Then the equality
\begin{equation}
    \label{sam1}
C(X)=\{\operatorname{diam} X\}\cup\left(\bigcap_{i=1}^{k} C(X_i)\right)
\end{equation}
holds, where $C(X)$ is the center of distances of the ultrametric space $(X,d)$, and $C(X_1),\dots,C(X_k)$ are the centers of distances of the ultrametric spaces $(X_1,d|_{X_1 \times X_1})$, $\dots$, $(X_k,d|_{X_k \times X_k})$.
\end{Proposition}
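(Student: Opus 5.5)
Plan: prove the equality through Proposition~\ref{ugrdd}, $C(X)=\bigcap_{p\in X}D_p(X)$, after first computing each $D_p(X)$ in terms of the part containing $p$. Fix $p\in X$ and let $X_i$ be the part of $G_X$ with $p\in X_i$. By Theorem~\ref{ref}, $X_i$ is the open ball $B_r(p)$ with $r=\operatorname{diam}X$. Hence $d(p,x)<\operatorname{diam}X$ for $x\in X_i$. For $x\notin X_i$, $x$ and $p$ lie in different parts, so they are adjacent in $G_X$ and $d(p,x)=\operatorname{diam}X$. Since $k\ge 2$, some point lies outside $X_i$. This gives the key identity
\begin{equation*}
D_p(X)=\{\operatorname{diam}X\}\cup D_p(X_i),
\end{equation*}
where $D_p(X_i)$ is computed in the subspace $(X_i,d|_{X_i\times X_i})$, and $\operatorname{diam}X\notin D_p(X_i)$.

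Next, intersect over all $p$, grouping the points by parts:
\begin{equation*}
C(X)=\bigcap_{i=1}^{k}\bigcap_{p\in X_i}\left(\{\operatorname{diam}X\}\cup D_p(X_i)\right)
=\{\operatorname{diam}X\}\cup\bigcap_{i=1}^{k}\bigcap_{p\in X_i}D_p(X_i).
\end{equation*}
The second equality is the distributive law $\bigcap_j(A\cup B_j)=A\cup\bigcap_j B_j$. Applying Proposition~\ref{ugrdd} to each subspace $X_i$ gives $\bigcap_{p\in X_i}D_p(X_i)=C(X_i)$, which yields \eqref{sam1}.

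The main obstacle is a degenerate case: a part $X_i$ may be a single point, so that $|X_i|=1$. Then $(X_i,d)$ is a one-point metric space and $D_p(X_i)=\{0\}=C(X_i)$ directly from the definitions, so the argument still goes through. I would note explicitly that Proposition~\ref{ugrdd} and Definition~\ref{zcgh572} apply to one-point spaces, since they only require a metric space. Alternatively one could use Corollary~\ref{sefugi}, since the path from $\{p\}$ to $r_X$ passes through the node $X_i$ and its subpath is the corresponding path in $T_{X_i}$. The direct ball argument above avoids the tree machinery, so I would use it.
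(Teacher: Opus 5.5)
Your proof is correct, but it takes a different route from the paper.

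\textbf{The paper's route.} The paper argues directly from Definition~\ref{zcgh572} by two separate inclusions. For $\supseteq$ it uses Lemma~\ref{l31} to get $\operatorname{diam}X$, and for $t\neq\operatorname{diam}X$ it uses the fact that the parts cover $X$. For $\subseteq$ it observes that a solution of $d(p,x)=t<\operatorname{diam}X$ must lie in the ball $B_r(p)$ with $r=\operatorname{diam}X$. It then invokes Proposition~\ref{typs} (every point of an ultrametric ball is a center) to get $t\in C(B_r(p))$, and identifies these balls with the parts via Theorem~\ref{ref}.

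\textbf{Your route.} You pass through Proposition~\ref{ugrdd} and compute $D_p(X)=\{\operatorname{diam}X\}\cup D_p(X_i)$ once. The identity $\bigcap_j(A\cup B_j)=A\cup\bigcap_j B_j$ then gives both inclusions at once. The membership $\operatorname{diam}X\in C(X)$ comes out automatically instead of from Lemma~\ref{l31}.

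\textbf{Comparison.} Your argument is shorter, and it also uses less. The formula for $D_p(X)$ needs only two facts: points in different parts are adjacent in $G_X$, and $k\ge 2$ with non-empty parts. Neither the ball description of the parts nor Proposition~\ref{typs} is needed. Your side remark $\operatorname{diam}X\notin D_p(X_i)$ is true but plays no role in the set identity. As a result, your argument establishes the equality for any finite metric space whose diametrical graph is complete multipartite; ultrametricity enters only through Theorem~\ref{ref}. The paper's argument, by contrast, does without Proposition~\ref{ugrdd} and stays at the level of the definition. Your treatment of one-point parts is fine and, as you say, needs no special case.
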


\begin{proof}
Equality \eqref{sam1} holds if and only if the inclusions
\begin{equation}
    \label{sam2}
C(X)\supseteq \{\operatorname{diam} X\}\cup\left(\bigcap_{i=1}^{k} C(X_i)\right)
\end{equation}
and
\begin{equation}
    \label{sam3}
C(X)\subseteq \{\operatorname{diam} X\}\cup\left(\bigcap_{i=1}^{k} C(X_i)\right)
\end{equation}
are valid.

Inclusion \eqref{sam2} holds if we have the membership relation
\begin{equation}
    \label{sam4}
t\in C(X)
\end{equation}
whenever
\begin{equation}
    \label{sam5}
t \in \{\operatorname{diam} X\} \cup \left\{ \bigcap_{i=1}^{k} C(X_i) \right\}. 
\end{equation}

Let  $t$ satisfy \eqref{sam5}. If 
\begin{equation*}
    t=\operatorname{diam} X
\end{equation*}
holds, then \eqref{sam4} is valid by Lemma~\ref{l31}. Let us consider the case when 
\begin{equation*}
t \ne \operatorname{diam} X.
\end{equation*}
Then using \eqref{sam5} we obtain the membership relation
\begin{equation}
    \label{sam6}
t \in C(X_i)
\end{equation}
for each  $i\in \{1,\dots,k\}.$
Definition~\ref{zcgh572} and formula \eqref{sam6} imply that the equation
\begin{equation}
    \label{sam7}
d(p,x)=t 
\end{equation}
has a solution $x\in X_i$ for each $p\in X_i$ and every $i\in \{1,\dots,k\}$.

The last statement implies that \eqref{sam7} has a solution $x\in X$ for each $p\in X$, because we have the equality
\[
X=\bigcup_{i=1}^{k} X_i
\]
by Definitions~\ref{scfr}, \ref{dvhj} and Theorem~\ref{ref}. Thus \eqref{sam4} holds by Definition \ref{zcgh572}.

Inclusion \eqref{sam2} follows.

To prove \eqref{sam3} it is enough to
show that \eqref{sam5} holds whenever $t$ satisfies \eqref{sam4}.
Let us consider arbitrary $t$ which satisfies \eqref{sam4}. If $t=\operatorname{diam} X$ holds, then \eqref{sam5} evidently is true. If $t\ne \operatorname{diam} X$ holds, then using \eqref{lokias} and \eqref{sam4} we obtain the strict inequality 
\begin{equation*}
    t<\operatorname{diam} X.
\end{equation*}
The last inequality, Definition~\ref{zcgh572}, equality \eqref{lokias} and formula \eqref{sam4} imply that for each open ball $B_r(p)$ with $r=\operatorname{diam} X$ and $p\in X$ there is a point $x$ such that
\begin{equation*}
    d(p,x)=t \quad \text{and} \quad x\in B_r(p).
\end{equation*}
Since, by Proposition~\ref{typs}, every point of $B_r(p)$ is a center of $B_r(p)$, the number $t$ belongs to the center of distances $C(B_r(p))$ for each open ball $B_r(p)\subseteq X$ whenever $r=\operatorname{diam} X$.
Consequently, we have the membership relation
\begin{equation}
    \label{ss1}
t \in \bigcap_{B\in \mathcal{F}} C(B),
\end{equation}
where $\mathcal{F}$ is a family  of all open balls $B=B_r(p)\subseteq X$ with $p\in X$ and $r=\operatorname{diam} X$.

Now using \eqref{ss1} and Theorem~\ref{ref} we see that \eqref{sam5} holds. 

Inclusion \eqref{sam3} follows.

The proof is completed.
\end{proof}

\begin{Corollary} \label{kvcr}
Let $(X,d)$ be a finite ultrametric space with $|X|\geq 2$, let the diametrical graph $G_X$ be complete $k$-partite with the parts $X_1,\ldots,X_k$
and let $X_{k_0}$ be a part of $G_X$ such that
\[
|X_{k_0}|=\min\limits_{1\leq j\leq k}|X_j|.
\]
Then the inequality
\[
|C(X)| \leq 1+|C(X_{k_0})|
\]
holds, where $C(X)$ is the center of distances of the ultrametric space $(X,d)$ and 
$C(X_{k_0})$ is the center of distances of the ultrametric space 
$(X_{k_0}, d|_{X_{k_0}\times X_{k_0}})$.
\end{Corollary}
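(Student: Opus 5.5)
The inequality will follow directly from Proposition~\ref{suul} together with a cardinality count. By equality \eqref{sam1} we have
\[
C(X)=\{\operatorname{diam} X\}\cup\left(\bigcap_{i=1}^{k} C(X_i)\right).
\]
Since $X_{k_0}$ is one of the parts $X_1,\dots,X_k$, the intersection is contained in $C(X_{k_0})$. Hence
\[
C(X)\subseteq \{\operatorname{diam} X\}\cup C(X_{k_0}).
\]
Taking cardinalities and using $|A\cup B|\le |A|+|B|$ gives $|C(X)|\le 1+|C(X_{k_0})|$.

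Some care is needed about the meaning of $C(X_{k_0})$ when $|X_{k_0}|=1$. In that case $X_{k_0}=\{p\}$, and Definition~\ref{zcgh572} gives $D(X_{k_0})=\{0\}$ and $C(X_{k_0})=\{0\}$. Proposition~\ref{suul} was stated for arbitrary parts, so the one-point case is already covered by the same formula. I will remark that the definition of the center of distances makes sense for one-point spaces, so nothing changes there.

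The choice of a part of minimal cardinality plays no role in this argument: any part would do. The minimality is presumably recorded for the later induction toward the bound $1+\lfloor\log_2 n\rfloor$. There one uses $|X_{k_0}|\le |X|/2$, which holds because $k\ge 2$ by Theorem~\ref{ref}.

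There is no real obstacle here. The only point to check is that Proposition~\ref{suul} applies to the restricted ultrametrics on the parts, including degenerate one-point parts, and this is immediate from its statement.
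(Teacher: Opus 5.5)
Your proof is correct and follows the paper's own argument: the paper also derives the inclusion $C(X)\subseteq\{\operatorname{diam}X\}\cup C(X_{k_0})$ from equality \eqref{sam1} and then bounds the cardinality by $1+|C(X_{k_0})|$. Your side remarks are accurate: the one-point case is covered, and minimality of $|X_{k_0}|$ plays no role in this inequality, being needed only later for $|X_{k_0}|\le |X|/2$.
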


\begin{proof}
Equality \eqref{sam1} implies the inclusion
\begin{equation*}
C(X)\subseteq \{\operatorname{diam}X\}\cup C(X_{k_0}),
\end{equation*}
that gives us the inequalities
\[
|C(X)|
\leq \bigl|\{\operatorname{diam}X\}\cup C(X_{k_0})\bigr|
\leq 1+|C(X_{k_0})|,
\]
as required. 
\end{proof}

\begin{Corollary}\label{kgdzw}
Let $(X,d)$ be a finite ultrametric space with $|X|\ge 2$ and let the diametrical graph $G_X$ be a complete bipartite graph with the parts $X_1$ and $X_2$. If $(X_1,d|_{X_1\times X_1})$ and $(X_2,d|_{X_2\times X_2})$ are isometric to an ultrametric space $(Y,\rho)$, then the equality
\begin{equation}
    \label{sxhk}
|C(X)| = 1 + |C(Y)|
\end{equation}
holds, where $C(X)$ and $C(Y)$ are the centers of distances of the spaces $(X,d)$ and $(Y,\rho)$, respectively.
\end{Corollary}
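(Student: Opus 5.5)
We apply Proposition~\ref{suul} with $k=2$. In the present situation it gives
\[
C(X)=\{\operatorname{diam} X\}\cup\bigl(C(X_1)\cap C(X_2)\bigr).
\]
We then use the isometries to identify $C(X_1)$ and $C(X_2)$ with $C(Y)$, and finally check that $\operatorname{diam} X$ is a new element, not already in $C(Y)$.

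The first step is the invariance of the center of distances under isometry. Let $\Phi\colon X_i\to Y$ be an isometry. It maps $X_i$ bijectively onto $Y$ and preserves distances. Hence $D_p(X_i)=D_{\Phi(p)}(Y)$ for every $p\in X_i$, by \eqref{fesdi6}. Proposition~\ref{ugrdd} then gives $C(X_i)=\bigcap_{p\in X_i}D_p(X_i)=\bigcap_{q\in Y}D_q(Y)=C(Y)$ for $i=1,2$, because $\Phi$ is surjective. Consequently $C(X_1)\cap C(X_2)=C(Y)$, and therefore
\[
C(X)=\{\operatorname{diam} X\}\cup C(Y).
\]

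The second step is to show $\operatorname{diam} X\notin C(Y)$, which yields $|C(X)|=1+|C(Y)|$. Every element of $C(Y)$ lies in $D(Y)$, and $D(Y)=D(X_1)$ by the isometry. Moreover, $D(X_1)\subseteq[0,\operatorname{diam} X_1]$. By Theorem~\ref{ref}, $X_1$ is an open ball $B_r(c)$ with $r=\operatorname{diam} X$. Thus every distance between points of $X_1$ is strictly less than $\operatorname{diam} X$, and since $X_1$ is finite we get $\operatorname{diam} X_1<\operatorname{diam} X$. (Alternatively, this follows from \eqref{ojhr}: two points of the same part are not adjacent in $G_X$, so their distance differs from $\operatorname{diam} X$.) Hence $\operatorname{diam} X\notin D(Y)\supseteq C(Y)$, and the union is disjoint.

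The only point requiring care is this strictness, which rests on the fact that parts of the diametrical graph contain no diametrical pairs. The rest is bookkeeping with Proposition~\ref{suul} and Proposition~\ref{ugrdd}. A degenerate case is when $Y$ is a single point. Then $C(Y)=\{0\}$ and $C(X)=\{0,\operatorname{diam} X\}$, which is consistent with \eqref{sxhk}.
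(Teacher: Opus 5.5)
Your proposal is correct and follows essentially the same route as the paper: apply Proposition~\ref{suul} with $k=2$, use isometry invariance of the center of distances to get $C(X)=C(Y)\cup\{\operatorname{diam} X\}$, and conclude from $\operatorname{diam} X>\operatorname{diam} X_1=\operatorname{diam} Y$. The only difference is that you spell out the isometry invariance (via Proposition~\ref{ugrdd}) and the strict inequality (via Theorem~\ref{ref}), which the paper treats as evident.
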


\begin{proof}
Let $(Y,\rho)$ be an ultrametric space such that $(Y,\rho)$, $(X_1,d|_{X_1\times X_1})$ and $(X_2,d|_{X_2\times X_2})$ are isometric. It directly follows from Definitions \ref{eddrr} and \ref{zcgh572} that any two isometric spaces have the same center of distances.
 Hence we have the equality
\[
C(X) = C(Y) \cup \{ \operatorname{diam} X \}
\]
by \eqref{sam1}.
The last equality and the formula 
\begin{equation*}
    \operatorname{diam} X > \operatorname{diam} X_1=\operatorname{diam}X_2= \operatorname{diam} Y
\end{equation*}
give us equality \eqref{sxhk} as required. 
\end{proof}

\begin{Lemma}\label{sabra}
Let $(Y,\rho)$ be a finite ultrametric space with $|Y|\ge 2$ and let $T_Y=T_Y(r_Y,l_Y)$ be the representing tree of $(Y,\rho)$. If $p_1,p_2$ are distinct points of $Y$ and $v_0$ is an internal node of $T_Y$ such that $\{p_1\},\{p_2\}$ are direct successors of $v_0$, then the equality
\begin{equation}
\label{kar1}
D_{p_1}(Y)=D_{p_2}(Y)
\end{equation}
holds, where
\[
D_{p_i}(Y):=\{\rho(p_i,y):y\in Y\} 
\]
for $i=1,2.$
\end{Lemma}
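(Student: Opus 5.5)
The plan is to compute both sets $D_{p_1}(Y)$ and $D_{p_2}(Y)$ by Corollary~\ref{sefugi} and compare the resulting sets of labels. By equality \eqref{imgt} we have
\[
D_{p_i}(Y)=\{l_Y(u) : u\in V(P_{\{p_i\},r_Y})\}, \quad i=1,2,
\]
where $P_{\{p_i\},r_Y}$ is the path in $T_Y$ joining the leaf $\{p_i\}$ with the root $r_Y$. So it suffices to show that the two paths $P_{\{p_1\},r_Y}$ and $P_{\{p_2\},r_Y}$ differ only in their initial leaves, and that these leaves carry the same label.

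First I will describe the two paths. Since $\{p_i\}$ is a direct successor of $v_0$, Definition~\ref{saqh} gives $\{\{p_i\},v_0\}\in E(T_Y)$ and $v_0\in V(P_{\{p_i\},r_Y})$. If $v_0=r_Y$, then $P_{\{p_i\},r_Y}=(\{p_i\},r_Y)$. Otherwise, let $Q$ be the unique path joining $v_0$ and $r_Y$ (condition (iii) of Proposition~\ref{jgds}). The vertex $\{p_i\}$ is a leaf, so by Remark~\ref{efgyh} it is not on $Q$; appending the edge $\{\{p_i\},v_0\}$ to $Q$ therefore produces a path from $\{p_i\}$ to $r_Y$. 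By uniqueness this path is $P_{\{p_i\},r_Y}$. In both cases
\[
V(P_{\{p_i\},r_Y})=\{\{p_i\}\}\cup V_0,
\]
where $V_0$ is the vertex set of the path from $v_0$ to $r_Y$ (with $V_0=\{r_Y\}$ when $v_0=r_Y$). The set $V_0$ does not depend on $i$.

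Next I compare the labels of the leaves. The leaves $\{p_1\},\{p_2\}$ are one-point sets, so by the construction of $T_Y$, namely \eqref{gfuika}, we get $l_Y(\{p_1\})=\operatorname{diam}\{p_1\}=0=l_Y(\{p_2\})$. Consequently
\[
D_{p_1}(Y)=\{0\}\cup\{l_Y(u):u\in V_0\}=D_{p_2}(Y),
\]
which is \eqref{kar1}.

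The main point requiring care is the identification of the paths, namely that the leaf $\{p_i\}$ does not already lie on the path from $v_0$ to $r_Y$. This follows because a leaf has degree $1$ and is different from $v_0$ and $r_Y$. As an alternative, one could argue directly in $(Y,\rho)$: $p_1$ and $p_2$ lie in the same part of the subspace $v_0$, and the strong triangle inequality gives $\rho(p_1,y)=\rho(p_2,y)$ for every $y\ne p_1,p_2$. However, the tree argument above is cleaner.
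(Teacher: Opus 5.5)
Your proof is correct and follows the paper's argument. Like the paper, you reduce via Corollary~\ref{sefugi} to the label sets along the leaf-to-root paths, use uniqueness of paths in a tree to see that $P_{\{p_2\},r_Y}$ is $P_{\{p_1\},r_Y}$ with the leaf $\{p_1\}$ replaced by $\{p_2\}$, and conclude from $l_Y(\{p_1\})=0=l_Y(\{p_2\})$; your separate treatment of $v_0=r_Y$ and your check that $\{p_i\}$ is not already on the path from $v_0$ to $r_Y$ spell out two points the paper leaves implicit.
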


\begin{proof}
Equality \eqref{imgt} implies that \eqref{kar1} holds if and only if we have the equality
\begin{equation}
    \label{kar2}
\{l_Y(u)\colon u \in V(P_{\{p_1\},r_Y})\}=\{l_Y(u)\colon u \in V(P_{\{p_2\},r_Y})\},
\end{equation}
where, for $i=1,2$, $P_{\{p_i\},r_Y}$ is the path 
 joining the leaf $\{p_i\}$ with the root $r_Y$ in $T_Y$. 
 
 The vertices of the path $P_{\{p_1\},r_Y}$ can be numbered without repetitions such that
\[
V(P_{\{p_1\},r_Y})=\{\{p_1\},v_0,v_1,\ldots,v_n,r_Y\}
\]
and
\[
E(P_{\{p_1\},r_Y})=\{\{\{p_1\},v_0\},\{v_0,v_1\},\ldots,\{v_n,r_Y\}\}
\]
(see formula \eqref{e3.3-1}).
Since $\{p_1\}$ and $\{p_2\}$ are direct successors of $v_0$, the nodes $\{p_2\}$ and $v_0$ are adjacent 
\[
\{\{p_2\},v_0\}\in E(T_Y).
\]
Consequently,
\(
(\{p_2\},v_0,v_1,\ldots,v_n,r_Y)
\)
also is  a path in $T_Y$ and this path evidently joins $\{p_2\}$ and $r_Y$ in $T_Y$. Since any two different nodes of an arbitrary tree can be joined by only one path,  the equality
\[
P_{\{v_2\},r_Y}=(\{p_2\},v_0,v_1,\ldots,v_n,r_Y)
\]
holds.
The last equality and the equalities 
\[
l_Y(\{p_1\})=
\operatorname{diam}\{p_1\}=0=\operatorname{diam}\{p_2\}=l_Y(\{p_2\})
\]
(see formula \eqref{gfuika})  imply equality \eqref{kar2}. Equality \eqref{kar1} follows.
\end{proof}

The next proposition shows that every finite ultrametric space $(X,d)$ containing at least two points can be extended while preserving the center of distances $C(X)$.

\begin{Proposition}\label{woorf}
Let $(X,d)$ be a finite ultrametric space with $|X|\ge 2$. Then there exists a finite ultrametric space $(Y,\rho)$ such that
\begin{equation}
    \label{ned}
|Y|=1+|X|
\end{equation}
and  the equality
\begin{equation}\label{gaf1}
C(X)=C(Y)
\end{equation}
holds, where $C(X)$ and $C(Y)$ are the centers of distances of $(X,d)$ and $(Y,\rho)$, respectively.
\end{Proposition}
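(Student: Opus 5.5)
The plan is to add one new point $q$ that is a "twin" of an existing point lying at minimal positive distance from something. We use Lemma~\ref{sabra} to ensure that the distance sets $D_p$ are unchanged.

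Choose a point $p_1\in X$. Let $v_0$ be the internal node of $T_X$ whose direct successor is the leaf $\{p_1\}$; it exists since $|X|\ge 2$. Put $Y:=X\cup\{q\}$ with $q\notin X$, and define $\rho$ by
\[
\rho|_{X\times X}=d, \qquad \rho(q,x)=\rho(x,q):=d(p_1,x)\ \ (x\in X\setminus\{p_1\}), \qquad \rho(q,p_1):=l_X(v_0).
\]
On the tree side, this amounts to attaching a new leaf $\{q\}$ to $v_0$. The resulting labeled rooted tree still satisfies condition (i) of Theorem~\ref{edftt}:
\begin{itemize}
\item the out-degree of $v_0$ increases from at least $2$ to at least $3$;
\item the new leaf has label $0<l_X(v_0)$.
\end{itemize}
Hence it is the representing tree of some ultrametric space. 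Rather than go through that abstract space, I will directly check that $\rho$ is an ultrametric; this is the step needing care. Since $q$ is a copy of $p_1$ except for $\rho(q,p_1)=l_X(v_0)>0$, the strong triangle inequality for triples avoiding $p_1$ or $q$ reduces to that of $d$. For a triple $\{q,p_1,x\}$ we need
\[
l_X(v_0)\le d(p_1,x) \quad\text{for all } x\in X\setminus\{p_1\},
\]
which holds by Lemma~\ref{swrgh}: the path from $\{p_1\}$ to $\{x\}$ passes through $v_0$, and $v_0$ carries the label $l_X(v_0)$. Then
\[
\rho(q,x)=d(p_1,x)\le\max\{l_X(v_0),d(p_1,x)\},
\]
and $l_X(v_0)\le \max\{d(p_1,x),d(p_1,x)\}$. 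So the triangle inequalities close, and $|Y|=1+|X|$ gives \eqref{ned}.

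Next, compare the sets $D_p$. For $x\in X$, $D_x(Y)=D_x(X)\cup\{\rho(x,q)\}$. If $x\neq p_1$, then $\rho(x,q)=d(x,p_1)\in D_x(X)$. For $x=p_1$, $\rho(p_1,q)=l_X(v_0)$, which equals $d(p_1,y)$ for any other leaf $\{y\}$ below $v_0$ (again by Lemma~\ref{swrgh}), so it lies in $D_{p_1}(X)$. Thus $D_x(Y)=D_x(X)$ for all $x\in X$. Also $D_q(Y)=D_{p_1}(Y)$, either by direct inspection or by Lemma~\ref{sabra} applied to $T_Y$, where $\{p_1\}$ and $\{q\}$ are both direct successors of $v_0$. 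Proposition~\ref{ugrdd} then gives
\[
C(Y)=\bigcap_{p\in Y}D_p(Y)=\bigcap_{x\in X}D_x(X)=C(X),
\]
which is \eqref{gaf1}.

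The main obstacle is mild: justifying that $l_X(v_0)$ is attained as $d(p_1,y)$ for some $y\neq p_1$. This needs $v_0$ to have a second leaf descendant, which holds because $\delta^+(v_0)\ge 2$ by Theorem~\ref{edftt} and every subtree ends in leaves.
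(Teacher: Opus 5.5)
Your proof is correct. The underlying idea is the same as the paper's: a new leaf placed as a sibling of $\{p_1\}$ under $v_0$. The execution, however, is genuinely different.

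\textbf{The paper's route.} The paper never writes down a metric. It adjoins a leaf $v^*$ with label $0$ to $v_0$ and checks condition (i) of Theorem~\ref{edftt} for the enlarged labeled rooted tree $T^*$. It then takes for $(Y,\rho)$ an abstract space whose representing tree is isomorphic to $T^*$. The equality $|Y|=1+|X|$ comes from counting leaves. The equality $C(Y)=C(X)\cap D_{x_1}(X)=C(X)$ comes from Corollary~\ref{sefugi} and Lemma~\ref{sabra}, transported through the isomorphism.

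\textbf{Your route.} You define $\rho$ explicitly on $X\cup\{q\}$, so the realization theorem, the isomorphism and the leaf count are not needed. The price is twofold:
\begin{itemize}
\item a hand check of the strong triangle inequality, which you correctly reduce to $l_X(v_0)\le d(p_1,x)$ for $x\ne p_1$;
\item a direct comparison of the sets $D_p$, which is sound. Rely on direct inspection for $D_q(Y)=D_{p_1}(Y)$, since you never actually prove that $T_Y$ is the enlarged tree.
\end{itemize}
Your argument is more elementary. It also shows that $X$ sits isometrically inside $Y$, so $Y$ really is an extension of $X$, as announced before the proposition. The paper's route instead stays within its tree formalism.

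\textbf{Two simplifications.} First, the step you flag as the main obstacle is immediate from Corollary~\ref{sefugi}: $v_0$ lies on the path from $\{p_1\}$ to the root, so $l_X(v_0)\in D_{p_1}(X)$. Second, the tree can be dropped entirely. Since $l_X(v_0)=\min\{d(p_1,x): x\in X\setminus\{p_1\}\}$, setting $\rho(q,p_1)$ equal to this minimum makes both needed facts obvious.
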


\begin{proof}
Let $T_X=T_X(r_X,l_X)$ be the representing tree of $(X,d)$. Since $(X,d)$ is finite and $|X|\ge2$ holds, we can find $x_1 \in X$ and an internal node $v_0$ of $T_X$ such that $\{x_1\}$ is a direct successor of $v_0$ (see Remark~\ref{mnbffss}).

Let us define a graph $T^*$ by equalities
\begin{equation}
    \label{avt1}
V(T^*) := V(T_X)\cup\{v^*\},
\end{equation}
\begin{equation}
    \label{avt2}
E(T^*) := E(T_X)\cup\{v^*,v_0\},
\end{equation}
where $v^*$ is an arbitrary point such that $v^*\notin V(T_X)$.

It is clear that $T^*$ is a connected graph satisfying the equalities
\[
|V(T^*)| = 1 + |V(T_X)|,
\]
\[
|E(T^*)| = 1 + |E(T_X)|.
\]
Hence $T^*$ is a tree by Proposition \ref{jgds}.

Now we determine a root $r^*$ in the tree $T^*$ as
\begin{equation}
    \label{a1}
r^* := r_X,
\end{equation}
where $r_X$ is the root of $T_X$, and define a labeling
\(
l^* : V(T^*) \to \mathbb{R}^+
\)
by the formula
\begin{equation}
    \label{a2}
l^*(u) :=
\begin{cases}
l_X(u), & \text{if } u \in V(T_X),\\
0, & \text{if } u = v^*.
\end{cases}
\end{equation}
Equalities \eqref{avt1}, \eqref{avt2} imply the equalities
\begin{equation}
    \label{be1}
\delta_{T^*}(v^*) = 1,\quad
\delta_{T^*}(v_0) = 1 + \delta_{T_X}(v_0) 
\end{equation}
and the equality
\begin{equation}
    \label{be2}
\delta_{T_X}(v) = \delta_{T^*}(v)
\end{equation}
for each $v \in V(T_X)\setminus\{v_0\}$.
Hence, using \eqref{zbh1} we can rewrite \eqref{be1}, \eqref{be2} as
\begin{equation}
    \label{be3}
\delta^{+}_{T^*}(v^*) = 0,
\quad
\delta^{+}_{T^*}(v_0) = 1 + \delta^{+}_{T_X}(v_0), 
\end{equation}
and
\begin{equation}
    \label{be4}
\delta^{+}_{T_X}(v) = \delta^{+}_{T^*}(v)
\end{equation}
for each $v \in V\setminus\{v_0\}$.

Let us prove that condition $(i)$ of Theorem \ref{edftt} is satisfied for $T(r,l)=T^*(r^*,l^*)$.

Since \(T_X(r_X,l_X)\) is the representing tree of \((X,d)\), condition $(i)$ of Theorem~\ref{edftt} and formulas \eqref{a2}, \eqref{be3},  \eqref{be4} show that the relation
\begin{equation}
    \label{be5}
\delta^{+}_{T^*}(v) \neq 1 
\end{equation}
and the logical equivalence
\begin{equation}
    \label{be6}
(\delta^{+}_{T^*}(v)=0)\;\Longleftrightarrow\; (l^{*}(v)=0) 
\end{equation}
are valid.
Thus it suffices to show that the inequality
\begin{equation}\label{eq:star1}
l^{*}(u)<l^{*}(v)
\end{equation}
holds whenever \(u,v\in V(T^{*})\) and \(u\) is a direct successor of \(v\) in the rooted tree \(T^{*}\).

Let \(u\) be the direct successor of \(v\) in \(T^{*}\). Suppose first that \(u,v\in T_X\).
Since \(T_X\) is a subtree of \(T^{*}\) and \[r^{*}=r_X\] holds by \eqref{a1}, Definition~\ref{saqh} implies that \(u\) is the direct successor of \(v\) in \(T_X(r_X)\).
The tree \(T_X=T_X(r_X,l_X)\) is the representing tree of \((X,d)\).
Hence we have the inequality
\[
l_X(u)<l_X(v)
\]
by Theorem~\ref{edftt}. 
This implies \eqref{eq:star1} by \eqref{a2}.

Let us consider the case when the set \(\{u,v\}\) is not a subset of \(V(T_X)\),
\begin{equation}\label{eq:star2}
\{u,v\}\not\subseteq V(T_X).
\end{equation}

Then using \eqref{avt1}, \eqref{avt2} and Definition~\ref{saqh} we obtain the equality
\begin{equation}\label{eq:star3}
\{u,v\}=\{v^{*},v_0\}.
\end{equation}
Since \(v_0\) is a vertex of \(T_X\), formulas \eqref{eq:star2}, \eqref{eq:star3} show that
\begin{equation}\label{eq:star4}
u=v^{*}\quad\text{and}\quad v=v_0.
\end{equation}
Consequently we have the equalities
\begin{equation}\label{eq:star5}
l^{*}(u)=0
\qquad\text{and}\qquad
l^{*}(v)=l_X(v_0)
\end{equation}
by formula \eqref{a2}.
Moreover, since \(\{x_1\}\in T_X\) is a direct successor of \(v_0\) in the representing tree \(T_X\), we have
\begin{equation}\label{eq:star6}
0= l_X(\{x_1\})<l_X(v_0)
\end{equation}
by definition of $T_X$.

Inequality \eqref{eq:star1} follows from \eqref{a2}, \eqref{eq:star5} and \eqref{eq:star6}.

Thus condition $(i)$ of Theorem~\ref{edftt} also is valid for the labeled rooted tree \(T^*(r^*,l^*)\). Consequently, by condition $(ii)$ of this theorem, we can find an ultrametric space \((Y,\rho)\) such that \(T^*(r^*,l^*)\) is isomorphic to the representing tree $T_Y=T_Y(r_Y,l_Y)$ of the space $(Y,\rho)$.

Let us prove equality \eqref{ned} for this $(Y,\rho)$.

It follows directly from the 
 definition of the representing tree $T_X$ that $\{\{x\}: x \in X\}$ is the set of leaves of this tree. 
Similarly the set $\{\{y\}: y \in Y\}$ is the set of leaves of the representing tree $T_Y$ of the space $(Y,\rho)$. 
Moreover, equalities \eqref{avt1}, \eqref{avt2} imply that the set
\[
\{v^*\} \cup \{\{x\}: x \in X\}
\]
  is the set of leaves of the tree $T^{*}$.
It was shown above that $T^{*}(r^{*},l^{*})$ and $T_Y$ are isomorphic. 
Consequently the sets of leaves of the trees $T_Y$ and $T^{*}(r^{*},l^{*})$ have one and the same cardinality.
Now using the equality
\[
\{v^*\} \cap \{\{x\}: x \in X\}=\varnothing,
\]
we obtain
\[
|Y|
=
\left|\{\{y\}: y\in Y\}\right|
=
\left|\{v^*\}\cup \{\{x\}: x\in X\}\right|
=
\left|\{v^*\}\right|
+
\left|\{\{x\}: x\in X\}\right|
=
1+|X|.
\]
Equality \eqref{ned} follows.

Let us prove equality \eqref{gaf1}.

By Proposition \ref{ugrdd} the equalities 
\begin{equation}\label{gaf2}
C(X) = \bigcap_{p \in X} D_p(X) 
\end{equation}
and
\begin{equation}\label{gaf3}
C(Y) = \bigcap_{p \in Y} D_p(Y) 
\end{equation}
hold when
\begin{equation}\label{gaf4}
 D_p(X) =\{d(x,p)\colon x\in X\}
\end{equation}
for $p\in X$, and
\begin{equation}\label{gaf5}
 D_p(Y) =\{\rho(y,p)\colon y\in Y\}
\end{equation}
for $p\in Y$.

Using Corollary \ref{sefugi} we can rewrite equalities \eqref{gaf4} and \eqref{gaf5} as
\begin{equation*}
D_p(X) = \{ l_X(u) : u \in V(P_{\{p\},X}) \} 
\end{equation*}
and
\begin{equation*}
D_p(Y) = \{ l_Y(u) : u \in V(P_{\{p\},Y}) \},
\end{equation*}
where \(P_{\{p\},X}\) is the path in \(T_X\) joining the leaf \(\{p\}\) with the root \(X\) and \(P_{\{p\},Y}\) is the path in \(T_Y\) joining \(\{p\}\) with \(Y\).

Thus \eqref{gaf2}, \eqref{gaf3} imply the equalities
\begin{equation}\label{j1}
C(X)=\bigcap_{x\in X}\{l_X(u):u\in V(P_{\{x\},X})\}
\end{equation}
and
\begin{equation}\label{j2}
C(Y)=\bigcap_{y\in Y}\{l_Y(u):u\in V(P_{\{y\},y})\}.
\end{equation}

The set $\{\{y\}, y\in Y\}$ is the set of leaves of the representing tree $T_Y$ of the space $(Y,\rho)$.

Since the trees $T^*(r^*,l^*)$ and $T_Y(r_Y,l_Y)$ are isomorphic as labeled rooted trees,
equality \eqref{j2} gives us the equality
\begin{equation}\label{j3}
C(Y)=\bigcap_{v\in L(T^*)}\{l^{*}(u):u\in V(P_{u,r^*})\},
\end{equation}
where $L(T^*)$ is the set of all leaves of $T^*$ and
$P_{u,r^*}$ is the path in $T^*$ joining the leaf $v\in L(T^*)$
with the root $r^*$ of $T^*$.
It follows from \eqref{avt1}, \eqref{avt2} that  the equality
\begin{equation}\label{j4}
L(T^*)=\{v^*\}\cup\{\{x\}:x\in X\}
\end{equation}
holds because $\{\{x\}:x\in X\}$ is the set of all leaves of $T_X$.

Now \eqref{j1}, \eqref{j3} and \eqref{j4} imply
\begin{equation}\label{j5}
C(Y)=C(X)\cap\{l^{*}(u):u\in V(P_{v^*,r^*})\}.
\end{equation}

Since $T^*(r^*,l^*)$ and $T_Y(r_Y,l_Y)$ are isomorphic and the leaves $\{x_1\}$, $v^* \in L(T^*)$ are direct successors of $v_0 \in V(T^*)$,
the equality
\begin{equation}
\{\, l^*(u) : u \in V(P_{v^*,r^*}) \,\}
=
\{\, l^*(u) : u \in V(P_{\{x_1\}},r^*) \,\}
\label{eq:star1-1}
\end{equation}
 holds by Lemma \ref{sabra}.
The definition of $T^*(r^*,l^*)$ implies the equality
\begin{equation}
\{\, l^*(u) : u \in V(P_{\{x_1\},r^*}) \,\}
=
\{\, l_X(u) : u \in V(P_{\{x_1\},X}) \,\},
\label{eq:star2-1}
\end{equation}
where $P_{\{x_1\},X}$ is the path in $T_X$ joining the leaf $\{x_1\}$ with the root $X$.
Equalities \eqref{j1}, \eqref{eq:star1-1} and \eqref{eq:star2-1} give us the inclusion
\begin{equation}
\{\, l^*(u) : u \in V(P_{v^*,r^*}) \,\} \supseteq C(X).
\label{eq:inclusion}
\end{equation}
The last inclusion together with equality \eqref{j5} gives us equality \eqref{gaf1} as required.

The proof is completed.
\end{proof}

For arbitrary positive integer number $n$ we denote by $M(n)$  the smallest positive integer number such that
\begin{equation*}
|C(X)| \le M(n)
\end{equation*}
holds for each  ultrametric space $(X,d)$ with $|X|=n$.

\begin{Corollary}\label{jjgfd}
The inequality
\begin{equation}
M(n_1) \le M(n_2)
\label{eq:monotone}
\end{equation}
holds for all positive integers $n_1,n_2$ which satisfy the inequality $n_1 \le n_2$.
\end{Corollary}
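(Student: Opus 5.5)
The plan is to reduce the statement to the case $n_2 = n_1 + 1$ and then argue by induction on $n_2 - n_1$. That is, it suffices to show
\[
M(n) \le M(n+1)
\]
for every positive integer $n$. The general inequality \eqref{eq:monotone} then follows by chaining
\[
M(n_1)\le M(n_1+1)\le \dots \le M(n_2).
\]

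For the one-step inequality, we first record that $M(n)$ is a well-defined finite number. Indeed, for any ultrametric space $(X,d)$ with $|X|=n$ we have $C(X)\subseteq D(X)$, and $|D(X)|\le \binom{n}{2}+1$. The maximum defining $M(n)$ is therefore attained by some ultrametric space $(X,d)$ with $|X|=n$ and $|C(X)|=M(n)$.

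\textbf{Case $n\ge 2$.} We apply Proposition~\ref{woorf} to this extremal space $(X,d)$. It yields a finite ultrametric space $(Y,\rho)$ with $|Y|=n+1$ and $C(Y)=C(X)$. Hence
\[
M(n)=|C(X)|=|C(Y)|\le M(n+1)
\]
by the definition of $M(n+1)$.

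\textbf{Case $n=1$.} Proposition~\ref{woorf} requires $|X|\ge 2$, so this case is handled directly. For a one-point space we have $C(X)=\{0\}$, so $M(1)=1$. Any two-point ultrametric space has $C=\{0,\operatorname{diam}\}$ by Lemma~\ref{l31}, and in fact every space with at least two points has $|C|\ge 2$. Thus $M(1)=1\le M(2)$.

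There is no real obstacle here. The only points requiring care are the separate treatment of $n=1$, where Proposition~\ref{woorf} does not apply, and the observation that $M(n)$ is attained by an actual space, which follows from the bound $|C(X)|\le|D(X)|$.
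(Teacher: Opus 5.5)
Your proposal is correct and follows the paper's argument: Proposition~\ref{woorf} handles the case $n_1\ge 2$, and the one-point space with $C(X)=\{0\}$ together with Lemma~\ref{l31} handles $n_1=1$. Your reduction to consecutive integers and your remark that $M(n)$ is attained by an actual $n$-point space make explicit what the paper's two-line proof leaves implicit.
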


\begin{proof}
If the double inequality $2 \le n_1 \le n_2$ holds, then \eqref{eq:monotone} follows from Proposition \ref{woorf}.

To prove \eqref{eq:monotone} in the case when 
\begin{equation*}
    1 \le n_1 \le n_2.
\end{equation*}
It suffices to note that
\begin{equation*}
C(X)=\{0\}
\end{equation*}
holds whenever $(X,d)$ is a one--point ultrametric space and to use Lemma \ref{l31}.
\end{proof}

The following theorem gives us a solution of Problem \ref{pr1} and it is the main result of the paper.

\begin{Theorem}\label{ter1}
Let $n \ge 1$ be an integer number, 
let $\log_2(n)$ be the binary logarithm of $n$, and let $\lfloor \log_2(n) \rfloor$ be the integer part of $\log_2(n)$.
Then the inequality
\begin{equation}\label{kam1}
|C(X)| \leq 1 + \lfloor \log_2 (n) \rfloor
\end{equation}
holds for each ultrametric space $(X,d)$ with $|X|=n$.
Moreover, there exists an ultrametric space $(Y,\rho)$ such that $|Y|=n$
and
\begin{equation}
    \label{kam2}
|C(Y)| = 1 + \lfloor \log_2(n) \rfloor .
\end{equation}
\end{Theorem}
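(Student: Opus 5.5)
We will prove the upper bound \eqref{kam1} by strong induction on $n=|X|$, using Corollary~\ref{kvcr}. For $n=1$ we have $C(X)=\{0\}$, so $|C(X)|=1=1+\lfloor\log_2 1\rfloor$. Let $n\ge 2$ and assume \eqref{kam1} holds for all ultrametric spaces with fewer than $n$ points.

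By Theorem~\ref{ref} the diametrical graph $G_X$ is complete $k$-partite with $k\ge 2$ and parts $X_1,\dots,X_k$. Let $X_{k_0}$ be a part of minimal cardinality. Since the parts partition $X$ and $k\ge2$, we get
\[
|X_{k_0}|\le \lfloor n/k\rfloor\le\lfloor n/2\rfloor<n .
\]
Corollary~\ref{kvcr} and the induction hypothesis then give
\[
|C(X)|\le 1+|C(X_{k_0})|\le 2+\lfloor\log_2 |X_{k_0}|\rfloor\le 2+\lfloor\log_2\lfloor n/2\rfloor\rfloor .
\]
It remains to use the elementary identity $\lfloor\log_2\lfloor n/2\rfloor\rfloor=\lfloor\log_2 n\rfloor-1$ for $n\ge2$. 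To check it, write $2^m\le n<2^{m+1}$; then $2^{m-1}\le\lfloor n/2\rfloor<2^m$. This finishes the induction. We use that $\lfloor\log_2 s\rfloor$ is nondecreasing in $s$, and the one-point case of $X_{k_0}$ is covered by the base case.

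For the sharpness statement \eqref{kam2}, we first build spaces with $2^m$ points attaining the bound. Let $Y_0$ be a one-point space. Given $Y_m$, define $Y_{m+1}=Y_m\times\{0,1\}$ with
\[
\rho_{m+1}((y,i),(y',j))=\rho_m(y,y') \text{ if } i=j, \qquad \rho_{m+1}((y,i),(y',j))=m+1 \text{ if } i\neq j.
\]
This is the usual ``two copies at a larger distance'' construction. Since $\operatorname{diam}Y_m\le m<m+1$, one checks directly that $\rho_{m+1}$ is an ultrametric: in any triangle with two copy-indices, two of the sides equal $m+1$. Its diametrical graph is complete bipartite, with parts the two copies, each isometric to $Y_m$. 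Corollary~\ref{kgdzw} then gives $|C(Y_{m+1})|=1+|C(Y_m)|$, so $|C(Y_m)|=1+m$ with $|Y_m|=2^m$. For $m=1$ we use the direct computation $C=\{0,1\}$, or the same corollary applied to two one-point parts.

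For general $n$, put $m=\lfloor\log_2 n\rfloor$ and start from $Y_m$. We apply Proposition~\ref{woorf} $n-2^m$ times (if $m\ge1$; for $n=1$ take $Y_0$). Each application adds one point and preserves the center of distances, so the result is a space $Y$ with $|Y|=n$ and $|C(Y)|=1+m$. Together with \eqref{kam1} this proves \eqref{kam2}; equivalently, by Corollary~\ref{jjgfd}, $M(n)=1+\lfloor\log_2 n\rfloor$.

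The only mildly delicate step is the recursion bound: we must use the minimal part, which has size at most $\lfloor n/2\rfloor$, together with the floor identity. All other steps are direct applications of the earlier results.
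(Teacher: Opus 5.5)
Your proof is correct. It uses the same three ingredients as the paper: Corollary~\ref{kvcr} applied to a smallest part of size at most $\lfloor n/2\rfloor$, the doubling construction with Corollary~\ref{kgdzw}, and padding via Proposition~\ref{woorf}. The difference is in how you organize the upper bound.

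The paper works with the extremal function $M$ and proves $M(2^l)=l+1$ and $M(2^l-1)=l$ by induction on $l$. It then needs the monotonicity of $M$ (Corollary~\ref{jjgfd}) twice:
\begin{itemize}
\item inside the induction, to bound $|C(X_{k_0})|$ by $M(2^{l_0})$ or $M(2^{l_0}-1)$ when the smallest part has arbitrary size below these values;
\item at the end, to sandwich a general $n$ between consecutive powers of two.
\end{itemize}
That monotonicity rests on the extension result, Proposition~\ref{woorf}. So in the paper even the inequality \eqref{kam1} logically depends on a construction.

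Your strong induction on $n$ proves the bound for every $n$ at once. The only monotonicity you need is that of the explicit function $s\mapsto\lfloor\log_2 s\rfloor$. The identity $\lfloor\log_2\lfloor n/2\rfloor\rfloor=\lfloor\log_2 n\rfloor-1$ for $n\ge 2$ absorbs the paper's separate treatment of $2^l-1$. As a result, your upper bound is fully self-contained given Corollary~\ref{kvcr}, and Proposition~\ref{woorf} enters only in the sharpness part. This is a cleaner dependency structure and a shorter argument.

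The paper's formulation has the advantage of identifying $M(n)$ directly as the answer to Problem~\ref{pr1}. You recover that at the end in the same way.

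Your explicit doubling $Y_{m+1}=Y_m\times\{0,1\}$ with cross-distance $m+1$ is just a concrete instance of the paper's disjoint isometric copies at distance $t^*>\operatorname{diam}Y_1$.
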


\begin{proof}
Let $\mathbb{N}$ be the set of all positive integers and let
$M:\mathbb{N}\to\mathbb{N}$ be a function defined by the rule: the equality
\begin{equation}
M(n)=m
\label{abrt}
\end{equation}
holds   if and only if we have
\begin{equation}
|C(X)|\le m
\label{qvle}
\end{equation}
 for all ultrametric spaces $(X,d)$ with $|X|=n$, and there exists
an ultrametric space $(Y,\rho)$ such that
\begin{equation}
|Y|=n,
\label{tmka}
\end{equation}
and
\begin{equation}
|C(Y)|=m.
\label{plre}
\end{equation}

We claim that the equalities
\begin{equation}
M(2^l)=l+1,
\label{hzda}
\end{equation}
and
\begin{equation}
    \label{hzda-1}
    M(2^l-1)=l
\end{equation}
are satisfied for every  $l\in \mathbb N$.

We will prove this claim by induction on $l$.

\textbf{Base case.} First we note that the equalities
\[
M(2^0)=1, \qquad M(2^1)=2
\]
follow from Definition \ref{zcgh572}.

\textbf{Induction hypothesis.} Suppose that the equalities
\begin{equation}
M(2^{l_0})=l_0+1,
\label{wkps}
\end{equation}
and 
\begin{equation}
    \label{wkps-1}
    M(2^{l_0}-1)=l_0
\end{equation}
hold for some $l_0\in \mathbb N$.

\textbf{Induction step.} Let us prove the equalities
\begin{equation}
M(2^{l_0+1})=l_0+2
\label{nqte}
\end{equation}
and
\begin{equation}
    \label{4a}
    M(2^{l_0+1}-1)=l_0+1.
\end{equation}

Let $(X,d)$ be an arbitrary ultrametric space such that
\begin{equation}
|X|=2^{l_0+1}
\label{ydmf}
\end{equation}
and 
let $G(X)$ be the diametrical graph of $(X,d)$. By Theorem \ref{ref} the graph
$G(X)$ is complete $k$-partite with $k\ge2$.
Let $X_1,\dots,X_k$ be the parts of $G(X)$ and let $X_{k_0}$ be a part satisfying the equality
\begin{equation}
|X_{k_0}|=\min\limits_{1\leq j \leq k}|X_j|.
\label{vslq}
\end{equation}
Then, by Corollary \ref{kvcr},  we have the inequality
\begin{equation}
    \label{sas12}
|C(X)| \le 1 + |C(X_{k_0})|,
\end{equation}
where \(C(X_{k_0})\) is the center of distances of the ultrametric space
\(
(X_{k_0}, d|_{X_{k_0} \times X_{k_0}}).
\)

Definition \ref{scfr} implies the equality
\[
|X| = \sum_{j=1}^{k} |X_j|.
\]
The last equality, equality \eqref{vslq} and the  inequality \(k \ge 2\) imply that the inequality
\[
2|X_{k_0}| \le |X|
\]
holds.
Consequently we have
\begin{equation}
    \label{Lihgf}
|X_{k_0}| \le 2^{l_0}
\end{equation}
by \eqref{ydmf}.

Write $n_0 := |X_{n_0}|$.
Then, by definition of the function $M:\mathbb{N}\to\mathbb{N}$, we obtain the inequality
\begin{equation}
    \label{okggd}
|C(X_{k_0})| \le M(n_0). 
\end{equation}
It follows from Corollary \ref{jjgfd} that $M:\mathbb{N}\to\mathbb{N}$ is increasing. 
Consequently inequalities \eqref{sas12}, \eqref{Lihgf} and \eqref{okggd} imply
\begin{equation}\label{lqpom}
|C(X)| \le 1 + |C(X_{k_0})| \le 1 + M(n_0) \le 1 + M(2^{l_0}). 
\end{equation}
Since $(X,d)$ is an arbitrary ultrametric space satisfying condition \eqref{ydmf}, the definition of the function $M:\mathbb{N}\to\mathbb{N}$ and \eqref{lqpom} give us 
\[
M(2^{l_0+1}) \le 1 + M(2^{l_0}).
\]
The last inequality and \eqref{wkps} imply the inequality
\begin{equation}
    \label{wessmm}
M(2^{l_0+1}) \le l_0 + 2. 
\end{equation}
Thus, to prove \eqref{nqte} it is sufficient to show that there is an ultrametric space $(Y,\rho)$ such that
\begin{equation}
    \label{b1}
|Y| = 2^{l_0+1} 
\end{equation}
and
\begin{equation}
    \label{b2}
|C(Y)| = l_0 + 2. 
\end{equation}
Let us do it. By Induction hypothesis, we can find an ultrametric space $(Y_1,\rho_1)$ such that
\begin{equation}
    \label{b3}
    |Y_1|=2^{l_0}
\end{equation}
and
\begin{equation}
    \label{b4}
    |C(Y_1)|=l_0+1.
\end{equation}

Let an ultrametric space $(Y_2,\rho_2)$ be isometric to $(Y_1,\rho_1)$, and let $Y_1$ and $Y_2$ be disjoint sets,
\begin{equation*}
Y_1 \cap Y_2 = \varnothing.
\end{equation*}
Since $(Y_1,\rho_1)$ and $(Y_2,\rho_2)$ are isometric, the equality
\begin{equation}\label{eq:diam_equal}
\operatorname{diam} Y_1 = \operatorname{diam} Y_2
\end{equation}
holds.
Let us consider an arbitrary positive number $t^*$ satisfying the condition
\begin{equation*}
t^* > \operatorname{diam} Y_1 = \operatorname{diam} Y_2,
\end{equation*}
and
define a space $(Y,\rho)$ as
\begin{equation}\label{eq:Y_union}
Y := Y_1 \cup Y_2
\end{equation}
and, for all $x,y \in Y$
\begin{equation}\label{eq:rho_def}
\rho(x,y) :=
\begin{cases}
\rho_1(x,y), & x,y \in Y_1,\\
\rho_2(x,y), & x,y \in Y_2,\\
t^*, & \text{otherwise}.
\end{cases}
\end{equation}

It is easy to prove that $\rho$ is an ultrametric on $Y$ and $t^*$ is the diameter of $(Y,\rho)$,
\begin{equation}\label{eq:diamY}
t^*=\operatorname{diam}Y.
\end{equation}

Let $G$ be the complete bi-partite graph with parts $Y_1$ and $Y_2$. 
Equality \eqref{eq:diamY}, formula \eqref{eq:rho_def} and Definition \ref{dvhj} give us the equality
\[
G=G_Y,
\]
where $G_Y$ is the diametrical graph of $(Y,\rho)$.

Since $(Y_1,\rho_1)$ and $(Y_2,\rho_2)$ are disjoint and isometric, equalities \eqref{eq:diam_equal} and \eqref{eq:Y_union} imply 
\begin{equation}\label{eq:cardinality}
|Y| = |Y_1 \cup Y_2| = |Y_1| + |Y_2| = 2|Y_1|=2\cdot 2^{l_0}=2^{l_0+1}.
\end{equation}

Hence we have
\begin{equation}\label{eq:C_relation}
|C(Y)| = 1 + |C(Y_1)|
\end{equation}
by Corollary \ref{kgdzw}. Now equality \eqref{b2}
follows from \eqref{b4} and \eqref{eq:C_relation}. Thus equality \eqref{nqte} holds.

Let us prove equality \eqref{4a}.
Since \( M:\mathbb{N} \to \mathbb{N} \) is an increasing function and $l_0\in \mathbb N$, the double inequality
\begin{equation}
    \label{wcgqq}
2^{l_0} <2^{l_0+1} - 1 \le 2^{l_0+1} 
\end{equation}
implies 
\begin{equation}
    \label{pioio}
M(2^{l_0}) \le M(2^{l_0+1}-1) \le M(2^{l_0+1}).
\end{equation}

Using \eqref{wkps} and \eqref{nqte}, we can rewrite \eqref{pioio} as
\[
l_0 + 1 \le M(2^{l_0+1}-1) \le l_0 + 2.
\]
Thus if \eqref{4a} is false, then the equality
\begin{equation}\label{1z}
    M(2^{l_0+1}-1)=l_0+2
\end{equation}
holds.
Hence, by definition of the function $M : \mathbb{N} \to \mathbb{N}$, there exists an ultrametric space $(Z,\delta)$ such that
\begin{equation}\label{2z}
|Z| = 2^{l_0+1} - 1
\end{equation}
and
\begin{equation}\label{3z}
|C(Z)| = l_0 + 2,
\end{equation}
where $C(Z)$ is the center of distances of the ultrametric space $(Z,\delta)$.

Let $G(Z)$ be the diametrical graph of $(Z,\delta)$.
By Theorem \ref{ref} the graph
$G(Z)$ is complete $k$-partite with $k\ge2$.
Let $Z_1,\dots,Z_k$ be the parts of $G(Z)$ and let $Z_{k_0}$ be a part satisfying the equality
\begin{equation}
|Z_{k_0}|=\min\limits_{1\leq j \leq k}|Z_j|.
\label{vslq-1}
\end{equation}
Then, by Corollary \ref{kvcr},  we have the inequality
\begin{equation}
    \label{sas12-1}
|C(Z)| \le 1 + |C(Z_{k_0})|,
\end{equation}
where \(C(Z_{k_0})\) is the center of distances of the ultrametric space
\(
(Z_{k_0}, d|_{Z_{k_0} \times Z_{k_0}}).
\)

Definition \ref{scfr} implies the equality
\[
|Z|=\sum_{j=1}^{k}|Z_j|.
\]
The last equality, equality \eqref{vslq-1}
and the inequality $k \ge 2$ give us the inequality
\begin{equation*}
2|Z_{k_0}| \le |Z|,
\end{equation*}
that can be written in the form
\begin{equation}\label{eq:x2}
2|Z_{k_0}| \le 2^{l_0+1}-1
\end{equation}
by \eqref{2z}.
Since $|Z_{k_0}|$ is an integer number, inequality \eqref{eq:x2} implies
\begin{equation}\label{eq:x3}
|Z_{k_0}| \le 2^{l_0}-1.
\end{equation}
By definition of the function $M:\mathbb{N}\to\mathbb{N}$ we have the inequality
\begin{equation}\label{eq:x4}
|C(Z_{k_0})| \le M(|Z_{k_0}|).
\end{equation}
Hence the inequality
\begin{equation}\label{eq:x5}
|C(Z_{k_0})| \le M(2^{l_0}-1)
\end{equation}
follows from \eqref{eq:x3} and \eqref{eq:x4}, since $M:\mathbb{N}\to\mathbb{N}$ is increasing. 
Now \eqref{eq:x5} and \eqref{wkps-1} give us the inequality
\begin{equation}\label{eq:x6}
|C(Z_{k_0})| = l_0.
\end{equation}
Consequently,
\begin{equation}\label{eq:x7}
|C(Z)| \le l_0+1
\end{equation}
holds by \eqref{sas12-1} and \eqref{eq:x6}.
The last inequality contradicts equality \eqref{3z}. Equality \eqref{4a} follows.

The Induction step is completed.

Now we are ready to prove inequality \eqref{kam1}.

Let us consider an arbitrary ultrametric space $(X,d)$ with a given $|X| = n$.
If the number $n$ can be written as
\[
n = 2^l,
\]
where $l$ is an integer number, then the equality
\begin{equation}
    \label{5w}
\lfloor \log_2(n) \rfloor = l
\end{equation}
holds and, consequently \eqref{kam1} turns to the inequality
\begin{equation}
    \label{6w}
|C(X)| \leq 1 + l.
\end{equation}
The last inequality follows from \eqref{hzda} and the definition of the function $M : \mathbb{N} \to \mathbb{N}$. Moreover, this definition implies also the existence of an ultrametric space $(Y,\rho)$ that satisfies \eqref{kam2} and the equality $|Y| = n$.

Let us consider the case when the number $n$ cannot be written in the form $2^l$ with integer $l$. In this case we can find $l\in\mathbb{N}$ such that
\begin{equation}\label{eq:x8}
2^{l} < n \le 2^{l+1}-1.
\end{equation}
The last double inequality implies equality \eqref{5w}, that, as was noted above, gives us \eqref{kam1} in form \eqref{6w}.

To complete the proof it suffices to note that equalities \eqref{hzda}, \eqref{hzda-1} imply the equality 
\begin{equation*}
M(n)=l+1
\end{equation*}
for each $n$ satisfying \eqref{eq:x8}, since $M:\mathbb{N}\to\mathbb{N}$ is increasing. Thus, by the definition of $M:\mathbb{N}\to\mathbb{N}$, we can find an ultrametric space $(Y,\rho)$ such that \eqref{kam2} holds whenever $n$ satisfies \eqref{eq:x8}. 

The proof is completed.
\end{proof}

\begin{Remark}\label{yncx}
    In paper \cite{DR2026MDPI}, the authors conjectured that Theorem \ref{ter1} holds for all finite ultrametric spaces (see Conjecture 3 of \cite{DR2026MDPI}). 
\end{Remark}

The next result can be considered as a dual form of Theorem \ref{ter1}.

\begin{Corollary}\label{wsaasr} Let $m \ge 1$ be an integer number and let $(X,d)$ be a finite ultrametric space such that 
\[
|C(X)| = m,
\]
where $C(X)$ is the center of distances of the space $(X,d)$. Then the set $X$ contains at least $2^{m-1}$ points,
\[
|X| \ge 2^{m-1}.
\]
\end{Corollary}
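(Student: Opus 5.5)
This is a direct contrapositive reading of inequality \eqref{kam1} in Theorem~\ref{ter1}. Put $n := |X|$. Since $(X,d)$ is finite and non-empty, $n \ge 1$, so Theorem~\ref{ter1} applies and gives
\[
m = |C(X)| \le 1 + \lfloor \log_2 (n) \rfloor .
\]
Consequently we have $m - 1 \le \lfloor \log_2(n) \rfloor \le \log_2(n)$.

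Since the binary exponential function $x \mapsto 2^x$ is strictly increasing on $\mathbb R$, exponentiating the last inequality yields
\[
2^{m-1} \le 2^{\lfloor \log_2(n)\rfloor} \le 2^{\log_2(n)} = n = |X|,
\]
which is the required inequality. The assumption $m \ge 1$ ensures that $2^{m-1}$ is a positive integer, so the statement is meaningful. It is in fact automatic, because $0 \in C(X)$ for every non-empty $X$: either use Lemma~\ref{l31}, or note directly that $d(p,p)=0$.

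There is essentially no obstacle. The only care needed is to check that Theorem~\ref{ter1} is applied with $n = |X|$ exactly, and not with a bound on $|X|$. It is also worth noting that the estimate is sharp. The spaces $(Y,\rho)$ built in the proof of Theorem~\ref{ter1}, with $|Y| = 2^{m-1}$ and $|C(Y)| = m$, show that $2^{m-1}$ cannot be replaced by any larger number, although this sharpness is not part of the statement to be proved.
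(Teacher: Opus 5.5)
Your proof is correct and follows the paper's approach. The paper gives no separate proof of this corollary; it presents it as the dual form of inequality \eqref{kam1} in Theorem~\ref{ter1}, which is exactly your rewriting of $m \le 1+\lfloor \log_2 |X| \rfloor$ as $2^{m-1} \le |X|$.
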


\begin{Example}\label{exw2} Let $n \geq 1$ be an integer number and let $X$ be the set of all finite binary words $(x_1,\ldots,x_n)$, where $x_i \in \{0,1\}$ for each $i \in \{1,\ldots,n\}$. Then the cardinal number of $X$ is $2^n$, 
\[
|X| = 2^n.
\]
Let us define the distance between words $x=(x_1,\ldots,x_n)$ and $y=(y_1,\ldots,y_n)$ as
\[
d(x,y) := 2^{-m}
\]
if  $m$ is the first place at which the words $x$ and $y$ are different, and write
\[
d(x,y) := 0
\]
if $x=y$.
Then $(X,d)$ is an ultrametric space with the center of distances 
\[
C(X)=\{0\}\cup\left\{\tfrac12,\ldots,\tfrac1{2^n}\right\}.
\]
 Thus the equalities
\[
|C(X)| = 1+n = 1+\log_2 |X|=1+\lfloor \log_2|X| \rfloor
\]
hold.
\end{Example}

\begin{Remark}\label{kartf} The above example and results of paper \cite{Staiger2024} show that the centers of distances of finite ultrametric spaces are also connected with topologies on the set of finite words.
\end{Remark}

\section{Concluding Remarks and Expected Results}\label{sec4}

Below we will use the concept of weakly similar ultrametric spaces.

\begin{Definition} Let $(X,d)$ and $(Y,\rho)$ be ultrametric spaces. A mapping $\Phi : X \to Y$ is a {\it weak similarity} of $(X,d)$ and $(Y,\rho)$ if $\Phi$ is bijective and there is a strictly increasing function $f : D(Y) \to D(X)$ such that the equality
\[
d(x,y) = f\bigl(\rho(\Phi(x),\Phi(y))\bigr)
\]
holds for all $x,y \in X$.
\end{Definition}

If $\Phi: X \to Y$is a weak similarity, then we say that $(X, d)$ and $(Y,\rho)$ are {\it weakly similar}.

\begin{remark}
The concept of weak similarity was introduced in \cite{DP2013AMH} for general semimetric spaces. 
\end{remark}

\begin{Conjecture}
Let $n \in \mathbb{N}$ and let $(X,d)$ and $(Y,\rho)$ be ultrametric spaces such that
\begin{equation}\label{eq:4.1}
|X| = |Y| = 2^n
\end{equation}
and
\begin{equation}\label{eq:4.2}
|C(X)| = n + 1.
\end{equation}
Then the equality
\begin{equation*}
|C(Y)| = |C(X)|
\end{equation*}
holds if and only if $(X,d)$ and $(Y,\rho)$ are weakly similar.
\end{Conjecture}

It is easy to see that isometric ultrametric spaces have one and the same center of distances. The following conjecture claims, in particular, that the converse statement is also valid under some restrictions on the spaces  and their centers of distances.

\begin{Conjecture}
Let $n \in \mathbb{N}$ and let $(X,d)$ and $(Y,\rho)$ be ultrametric spaces satisfying equalities \eqref{eq:4.1} and \eqref{eq:4.2}. Then the equality
\begin{equation*}
C(X) = C(Y)
\end{equation*}
holds if and only if $(X,d)$ and $(Y,\rho)$ are isometric.
\end{Conjecture}

The next conjecture was partially motivated by results of Mateusz Kula on the center of distances of Bernstein subsets of $\mathbb{R}$ \cite{Kula2025}.

\begin{Conjecture} Let $A$ be a finite subset of $\mathbb{R}^+$ such that 
\(
0 \in A.
\)
Then there exists a finite ultrametric space $(X,d)$ that satisfies the equalities
\[
D(X)=C(X) = A,
\]
where $D(X)$ and $C(X)$ are the distance set of $(X,d)$ and, respectively, the center of distances of $(X,d)$. 
\end{Conjecture}

\section*{Declarations}

\subsection*{Funding} This study was funded by grant 367319 from the Research Council of Finland.

\subsection*{Conflict of interest/Competing interests} The authors declare no conflicts of interest.

\subsection*{Ethics approval and consent to participate} Not applicable

\subsection*{Consent for publication} Not applicable

\subsection*{Data availability} Not applicable

\subsection*{Materials availability} Not applicable

\subsection*{Code availability} Not applicable

\subsection*{Author contribution} Conceptualization, O.D.; writing -- original draft preparation, O.D.; methodology, O.D.; formal analysis, O.R.; writing -- review and editing, O.R.; investigation, O.D.; resources, O.R. All authors have read and agreed to the published version of the manuscript.

%%===================================================%%
%% For presentation purpose, we have included        %%
%% \bigskip command. Please ignore this.             %%
%%===================================================%%
\bigskip
%\begin{flushleft}%
%Editorial Policies for:

%\bigskip\noindent
%Springer journals and proceedings: \url{https://www.springer.com/gp/editorial-policies}

%\bigskip\noindent
%Nature Portfolio journals: \url{https://www.nature.com/nature-research/editorial-policies}

%\bigskip\noindent
%\textit{Scientific Reports}: \url{https://www.nature.com/srep/journal-policies/editorial-policies}

%\bigskip\noindent
%BMC journals: \url{https://www.biomedcentral.com/getpublished/editorial-policies}
%\end{flushleft}

%%===========================================================================================%%
%% If you are submitting to one of the Nature Portfolio journals, using the eJP submission   %%
%% system, please include the references within the manuscript file itself. You may do this  %%
%% by copying the reference list from your .bbl file, paste it into the main manuscript .tex %%
%% file, and delete the associated \verb+\bibliography+ commands.                            %%
%%===========================================================================================%%

\bibliography{bib2020.07}
% common bib file
%% if required, the content of .bbl file can be included here once bbl is generated
%%\input bib2020.07.bbl

\end{document}